\newtheorem{theorem}{Theorem}[section]
\newtheorem*{theorem*}{Theorem}
\newtheorem{proposition}[theorem]{Proposition}
\newtheorem{corollary}[theorem]{Corollary}
\newtheorem{claim}[theorem]{Claim}
\newtheorem{lemma}[theorem]{Lemma}
\newtheorem{fact}[theorem]{Fact}
\theoremstyle{definition}
\newtheorem{definition}[theorem]{Definition}
\newtheorem{question}[theorem]{Question}
\newtheorem{example}[theorem]{Example}
\newtheorem{remark}[theorem]{Remark}
\numberwithin{equation}{section}
\def\lk{\text{lk}}
\newcommand{\R}{\mathbb R}
\newcommand{\x}{\times}
\newcommand{\Q}{\mathbb{Q}}
\newcommand{\N}{\mathbb{N}}
\newcommand{\Z}{\mathbb{Z}}
\newcommand{\SFH}{\text{SFH}}
\newcommand{\HF}{\text{HF}}
\newcommand{\HFK}{\widehat{\text{HFK}}}
\newcommand{\HFL}{\widehat{\text{HFL}}}
\newcommand{\fs}{\mathfrak{s}}
\newcommand{\ft}{\mathfrak{t}}
\begin{document}

\title{New Invariants for Virtual Knots via Spanning Surfaces}

\author{Andr\'as Juh\'asz}

\address{Mathematical Institute, 
	University of Oxford, 
	Andrew Wiles Building, 
	Radcliffe Observatory Quarter, 
	Woodstock Road, 
	Oxford, 
	OX2 6GG,
	UK}

\email{juhasza@maths.ox.ac.uk}

\author{Louis H. Kauffman}

\address{Department of Mathematics, Statistics and Computer Science,
University of Illinois at Chicago,
851 South Morgan Street,
Chicago, Illinois 60607-7045, 
USA}

\email{kauffman@uic.edu}

\author{Eiji Ogasa}  

\address{Meijigakuin University, Computer Science\\ 
Yokohama, Kanagawa, 244-8539 \\
Japan} 

\email{pqr100pqr100@yahoo.co.jp, ogasa@mail1.meijigkakuin.ac.jp} 

\begin{abstract}
	We define three different types of spanning surfaces for knots in thickened surfaces. We use these to introduce
	new Seifert matrices, Alexander-type polynomials, genera, and a signature invariant. One of these Alexander polynomials extends to 
	virtual knots and can obstruct a virtual knot from being classical. Furthermore, it can distinguish a knot in a thickened surface from its mirror up to isotopy. We also propose several constructions
	of Heegaard Floer homology for knots in thickened surfaces, and give examples why they are not stabilization invariant. 
	However, we can define Floer homology for virtual knots by taking a minimal genus representative.  
	Finally, we use the Behrens--Golla $\delta$-invariant to obstruct a knot from being a stabilization of another.
\end{abstract}

\maketitle

\section{Introduction }\label{secMain}

Virtual knots were introduced by the second author \cite{Kauffman, Kauffmani}. These are 
knots in thickened surfaces up to a stabilization operation; see Section~\ref{secVlink}. 
In this paper, we define three different types of spanning surfaces for knots in thickened surfaces that we call twin, web, and bracelet Seifert surfaces, respectively, and two genus-type numerical invariants $\varepsilon$ and $\xi$.
We use these surfaces to define new Alexander-type polynomials called the \emph{twin Alexander polynomial} (derived from a new type of Seifert matrix called the \emph{twin Seifert matrix}), the \emph{cc twin polynomial}, and the \emph{cc bracelet polynomial}. The twin Alexander polynomial descends to virtual knots.
The twin Alexander polynomial can be used to distinguish a knot in a thickened surface from its mirror image up to isotopy and can obstruct a virtual knot from being classical. Using twin Seifert matrices, we also introduce new signature invariants called \emph{twin signatures} for knots in thickened surfaces and for virtual knots. 

There are several notions of equivalence for knots in thickened surfaces, namely, identity, isotopy, diffeomorphism, and virtual equivalence. In our equivalence up to diffeomorphism, we only consider orientation-preserving diffeomorphisms of a thickened surface $F \times [-1,1]$ that map $F \times \{i\}$ to $F \times \{i\}$ for $i \in \{1, -1\}$. One can use the twin signature to distinguish a knot in a thickened surface from its mirror up to diffeomorphism and virtual equivalence, similarly to the case of classical knots.

Furthermore, we propose several natural constructions of Heegaard Floer homology for knots in thickened surfaces, and give examples why these are not invariant under stabilization. However, the Floer homology of a minimal genus representative is a virtual knot invariant. These Floer homologies can be used to detect $\varepsilon$ and $\xi$, and obstruct a knot from being a stabilization of another.

Jaeger, the second author, and Saleur~\cite{JKS}, 
Sawollek~\cite{Sawollek}, and 
Silver and  Williams~\cite{SilverWilliams} introduced 
an Alexander polynomial for virtual knots, 
which is known as the \emph{Sawollek polynomial} or the \emph{generalized Alexander polynomial}; see also Manturov~\cite{Manturov1, Manturov2}. 
This is not defined using Seifert surafaces,
but as a Yang--Baxter state sum by the work of the second author and Radford~\cite{KauffmanRadfrord}. It can also be defined via a generalization of the Burau representation of the braid group, and using Alexander biquandles, which is a generalization of the knot group; see \cite{BC, Kaip}.   
These Alexander polynomials are mirror sensitive and 
can obstruct a knot from being classical, but 
the calculation of the Sawollek polynomial is straightforward by taking a determinant associated with the diagram of the virtual knot or link.
 
Our approach, being topological, is subject to further investigation to find new ways for its calculation.
It remains to determine the relationship between the Sawollek polynomial and ours, and the relationship between the twin Alexander polynomial and Floer homology.
The fact that we define Seifert surfaces for virtual knots and associate Seifert matrices and a new signature invariant also goes beyond the earlier Alexander-type polynomials. 
 
An \emph{oriented knot}  (respectively, \emph{link})
in a connected, compact, oriented 3-manifold $M$ 
is an embedded circle 
(respectively, a disjoint union of embedded circles) in $M$.
A link in the three-sphere is called a \emph{classical} link. 
In this paper, a \emph{thickened surface} is 
$F \times [-1,1]$ for a closed, connected, and oriented surface $F$. 

Virtual knot theory is a generalization of classical knot theory, 
and studies the embeddings of circles in thickened surfaces
modulo isotopies and orientation-preserving diffeomorphisms
plus one-handle stabilizations of the surface.   
There is a natural bijection between 
the set of classical links  in the three-sphere and 
that of links embedded in the thickened two-sphere. 
Knots in thickened surfaces are said to be \emph{non-classical}  
if they cannot be made stably equivalent (see Section~\ref{secVlink}) to knots in the thickened sphere.

Knots and links in thickened surfaces have interesting properties different from classical links. 
One of them is that the linking number of two circles embedded in a thickened surface
takes values in not only integers but also half-integers; see Section~\ref{secVlink}. 

A {\it Seifert surface} $V$ 
for a link $L$ in a compact, connected, oriented 3-manifold $M$
is a compact, connected, oriented surface embedded in $M$ 
such that $L=\partial V$.  The link $L$ has a Seifert surface
if and only if $L$ is null-homologous in $M$.

Let $K$ be a  knot in a thickened surface $F\x[-1,1]$.
Then $K$ is not null-homologous 
in general. 
Even if $K$ is null-homologous and $U$ and $V$ 
are Seifert surfaces for $K$, then 
the Seifert matrices associated with $U$ and 
$V$ do not give the same Alexander polynomial in general 
(Example~\ref{prT}), unlike for classical links.
We shall overcome these issues and obtain a unique Alexander
polynomial for any knot $K$ in $F\x[-1,1]$. 
Our strategy is as follows.
 
Let $-K$ be a knot in $F\x[-1,1]$ obtained from $K$ by reversing the orientation of $K$, and  
  $K^\ast$  the mirror image of $K$ with respect to $F\x\{0\}$. 
Let  $K^{@}$ stand for $-K$ or $-K^\ast$.  (We do not consider $K^\ast$.) Consider the disjoint union $K\sqcup K^{@}$ by gluing two copies of $F \times [-1,1]$.
Then there is always a Seifert surface for $K\sqcup K^{@}$. 
We define a set of special Seifert surfaces for $K\sqcup K^{@}$ which are homologous relative to 
$K\sqcup K^{@}$; see 
Definition~\ref{defsun}, Proposition~\ref{thmmoon}, and Proposition~\ref{thmsea}.  
We call these \emph{twin Seifert surfaces} for $K \sqcup K^@$. 
Using these, we define the twin Alexander polynomials
$A^-_{K}$ and $A^{-\ast}_K$, which only depend on  
the knot type $K$ up to multiplication with $t^n$ for $n \in \Z$, and are associated with $K\sqcup -K$ 
and $K\sqcup -K^\ast$, respectively; see  
Definition~\ref{deftwin}, Theorem~\ref{thmsn}, and Proposition~\ref{pro1}. 
Note that components of Seifert matrices may be half-integers.
We also introduce the \emph{twin signatures} 
$\sigma^{-}(K)$ and $\sigma^{-\ast}(K)$ associated with these Seifert matrices.
The twin Alexander polynomial and the twin signature 
descend to virtual knots; see Section~\ref{secVAlex}.

Let $K$ be a classical knot.
If $\Delta_K$ is the classical Alexander polynomial of $K$, then 
\[
A^{-}_{K} = A^{-\ast}_{K} = \Delta_{K}^2; 
\]
see Remark~\ref{remQ}. Furthermore, if $\sigma(K)$ denotes the classical signature of $K$, then 
\[
\sigma^{-}(K) = 2\sigma(K).
\]
In fact, we have the following result for classical knots in thickened surfaces, which is an immediate corollary of Remark~\ref{remQ}.

\begin{fact}\label{facII}
Let $K$ be a knot in a thickened surface $F\x[-1,1]$ 
that lies in a 3-ball. 
Then the twin Alexander polynomial $A^@_{K}$ has the following properties, where $@ \in \{-, -\ast\}$:
\begin{enumerate}
\item All coefficients of $A^@_{K}$ are integers. 
\item Each non-zero root of $A^@_{K}$ has even  multiplicity.  
\item  $A^@_{K} = \Delta_{K\sqcup -K} = \Delta_{K\sqcup -K^\ast} = \Delta_{K}^2$.
\end{enumerate}
\end{fact}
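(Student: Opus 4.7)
The plan is to exhibit a twin Seifert surface for $K\sqcup K^@$ explicitly as a disjoint union of classical Seifert surfaces, after which all three claims fall out of the resulting block-diagonal Seifert matrix. Since $K$ lies in a 3-ball $B\subset F\times[-1,1]$, I would first view $K$ as a classical knot in $B$ and choose a classical connected Seifert surface $V\subset B$ with $\partial V=K$. The knot $K^@\in\{-K,-K^\ast\}$ lives in a 3-ball $B^@$ in the second copy of $F\times[-1,1]$, and analogously carries a classical Seifert surface $V^@\subset B^@$ with $\partial V^@=K^@$. The candidate twin Seifert surface is $U:=V\sqcup V^@$. I expect this to satisfy Definition~\ref{defsun} because both pieces sit in disjoint balls: the orientability and boundary conditions are clear, and any two such surfaces in disjoint balls are automatically homologous relative to $K\sqcup K^@$ in the glued 3-manifold.

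Next I would compute the Seifert form on $U$. Because $V$ and $V^@$ lie in disjoint contractible balls, every 1-cycle on $V$ has zero linking with every push-off of every 1-cycle on $V^@$, so the Seifert matrix splits as a block sum
\[
S_U \;=\; S_V \oplus S_{V^@}.
\]
Moreover, the linking pairings within each ball are ordinary $S^3$-linking numbers, which take integer (rather than half-integer) values; this immediately yields~(1). Taking determinants gives
\[
A^@_K(t) \;=\; \det(tS_U - S_U^T) \;=\; \Delta_K(t)\cdot\Delta_{K^@}(t).
\]
Using $\Delta_{-K}(t)=\Delta_K(t)$ together with the classical symmetry $\Delta_{-K^\ast}(t)\doteq\Delta_K(t)$, in either case $A^@_K=\Delta_K^2$, giving~(3); and~(2) follows because every root of $\Delta_K$ becomes a double root of $\Delta_K^2$.

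The main obstacle is verifying that $V\sqcup V^@$ genuinely meets the hypotheses of Definition~\ref{defsun}; if that definition insists on connectedness or prescribes an interaction with the canonical separating surface $F\times\{0\}$, I would tube the two pieces together (and route the tube through $F\times\{0\}$ if necessary). Such a modification amounts to a stabilization of $S_U$ by a pair of zero rows and columns, which preserves $\det(tS-S^T)$ and the integrality of entries, and therefore does not affect the computation above. The appeal to Proposition~\ref{thmmoon} and Proposition~\ref{thmsea} then ensures that the polynomial computed from this particular twin Seifert surface is the same as the one computed from any other, so the equalities of~(3) are not merely up to ambiguity.
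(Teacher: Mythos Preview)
Your overall strategy matches the paper's: both arguments exploit that $K$ sits in a ball to reduce the twin Seifert surface to a pair of classical Seifert surfaces, after which everything follows from $\Delta_{K}\cdot\Delta_{K^@}=\Delta_K^2$. The paper's own justification is essentially the one-line Remark~\ref{remQ}.

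There is, however, a genuine gap in your treatment of connectedness. Twin Seifert surfaces are required to be connected (see the sentence following Definition~\ref{defsun}), so $U=V\sqcup V^@$ is not admissible and you must tube. Tubing two disjoint pieces adds \emph{one} new generator to $H_1$, namely the belt circle of the three-dimensional one-handle: $b_0$ drops by one while $\chi$ drops by two, so $b_1$ increases by one. The resulting Seifert matrix therefore acquires a single zero row and column (compare the $5\times 5$ matrices in Example~\ref{ex:non-classical}), and hence $\det(tS-S^{T})=0$. So the tubing does \emph{not} preserve the determinant, and your displayed formula $A^@_K(t)=\det(tS_U-S_U^{T})$ is not the quantity the paper computes.

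What the paper takes for $A^@_K$ is the \emph{first} twin Alexander polynomial (cf.\ Proposition~\ref{pro1} and the computations in Example~\ref{ex:non-classical}), i.e.\ the gcd of the codimension-one minors of $tS-S^{T}$. With the zero row and column coming from the belt circle, the only nonzero such minor is obtained by deleting that row and column, which yields exactly $\det(tS_V-S_V^{T})\cdot\det(tS_{V^@}-S_{V^@}^{T})=\Delta_K\cdot\Delta_{K^@}=\Delta_K^2$. Once you replace the determinant by the first Alexander polynomial, the rest of your proof (integrality from classical linking numbers in the ball, double roots from the square) goes through and coincides with the paper's argument.
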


Our main result is that the twin Alexander polynomials can obstruct a virtual knot from being classical. Furthermore, they can distinguish some knots in thickened surfaces from their mirror images up to ambient isotopy:

\begin{theorem}\label{mthhayaku}
Let $K_{m,n} \subset T^2 \times [-1,1]$ be the knot in Figure~\ref{figmn}. If $mn \neq 0$, then the following hold:
\begin{enumerate}	
\item\label{it:1} The twin Alexander polynomial $A^-_{K_{m,n}}$ has a non-zero root of multiplicity one. 
\item\label{it:2} We have
 \[
 A^-_{K_{m,n}} \neq A^{-\ast}_{K_{m,n}}. 
 \]
\end{enumerate}
Both \eqref{it:1} and \eqref{it:2} imply that $K_{m,n}$ represents a non-classical virtual knot. Furthermore, \eqref{it:2} implies that  
$K_{m,n}$ and $K^\ast_{m,n}$ are not ambient isotopic as knots in $T^2 \times [-1,1]$. 
\end{theorem}

The above theorem follows from Example~\ref{ex:non-classical} and Proposition~\ref{prop:mirror}.

\begin{remark}\label{remiroiro}
The twin Alexander polynomial distinguishes some knots in thickened surfaces from their mirror images up to ambient isotopy. Note that this is of interest since the classical Alexander polynomial does not distinguish knots from their mirror images.
Thus, our theorem is saying that the above examples have minimal genus greater than zero.

One can detect chirality of non-classical virtual knots using the twin signature. Indeed, there is a non-classical knot $K$ 
in a thickened surface such that $\sigma^{-}(K) \neq 0$, hence $K\neq K^\ast$; see Example~\ref{ex:signature}.
Of course, for the trefoil $T_{2,3}$, we have 
$\sigma^{-}(T_{2,3}) \neq 0$, so $T_{2,3} \neq T_{2,3}^\ast$.

The pair of twin Alexander polynomials $A^-$ and $A^{-*}$ can be used to distinguish a knot in a thickened surface from its mirror up to isotopy, but not up to diffeomorphism or virtual equivalence; see Example~\ref{ex:Kmm}.
\end{remark}

In Section~\ref{secmodu}, we discuss Alexander modules constructed using twin Seifert surfaces 
which are likely to be related to the twin Alexander polynomial.  
We introduce two further classes of spanning surfaces called web Seifert surfaces and bracelet Seifert surfaces for knots in thickened surfaces in Sections~\ref{sectypeII} and~\ref{sectypeIII}. 
We define an integer-valued invariant $\varepsilon(K)$ for any knot $K$ in thickened surfaces 
by maximizing the Euler characteristics of web Seifert surfaces,
and $\xi(K)$ by minimizing the genera of bracelet Seifert surfaces. We show that $\varepsilon(K) = -\xi(K)$. 
If $\varepsilon(K)=0$, then $K$ represents the virtual trivial knot. 

We introduce some variants of Floer homology associated with 
knots in thickened surfaces  in Section~\ref{secSurf} 
and virtual knots in Section~\ref{secideFHVnew}.  
The above invariant $\varepsilon(K)$ is determined by a Floer homology for $K$; see Section~\ref{secshikashi}. 

Finally, we prove that the Behrens--Golla $\delta$-invariant 
defined via Floer homology in \cite[Theorem~4.1]{BG} 
can obstruct a virtual knot from being the virtual trivial knot 
or the virtual trefoil knot; see Section~\ref{sechirameki}.

\subsection*{Acknowledgement} We would like to thank 
Sheng Bai for helpful discussions.

\section{A review of virtual knot theory}\label{secVlink}

\subsection{The definition of virtual links}\label{subsecvk}

In this subsection, we briefly review the definition of virtual knots and links \cite{Kauffman, Kauffmani}, 
and we discuss some of their properties that are relevant to the present paper. \emph{Virtual knots} 
are  the embeddings of circles in thickened 
closed, oriented surfaces
modulo isotopies and orientation-preserving diffeomorphisms
plus one-handle stabilization of the surfaces.   

By a one-handle stabilization  (respectively, destabilization), 
we mean a surgery on the surface that is performed
in the complement of the link and 
that increases (respectively, decreases) the genus of the surface. 
Note that knots and links in a thickened surface can be represented by diagrams on the surface analogously to link diagrams drawn in the plane or on the two-sphere. 
From this point of view, a one-handle stabilization is obtained by
taking two points on the surface in the link diagram complement and cutting out two disks and then adding a tube between them.
A one-handle destabilization means
cutting the surface along a curve in the complement of the link diagram and 
capping the two new boundary curves with disks. 
Handle destabilization allows the virtual knot to be eventually placed in a least genus surface in which it can be represented. 
A theorem of Kuperberg~\cite{Kuperberg} asserts that such minimal representations are topologically unique. 

\begin{figure}
\centering
\includegraphics[width=23mm]{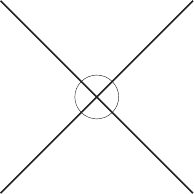}
\caption{{A virtual crossing}\label{pvcro0}}   
\end{figure}

Virtual knot theory has a diagrammatic formulation.
A \emph{virtual knot} can be represented by a \emph{virtual knot diagram} 
in $\R^2$ or $S^2$ 
containing a finite number of real crossings and some \emph{virtual crossings} 
indicated by a small circle placed around the crossing point as shown in Figure~\ref{pvcro0}.   
A virtual crossing is neither an over-crossing nor an under-crossing. A virtual crossing 
is a combinatorial structure keeping the information of the arcs of embedding
going around the handles of the thickened surface in the surface representation of the virtual link. 

\begin{figure}
\includegraphics[width=110mm]{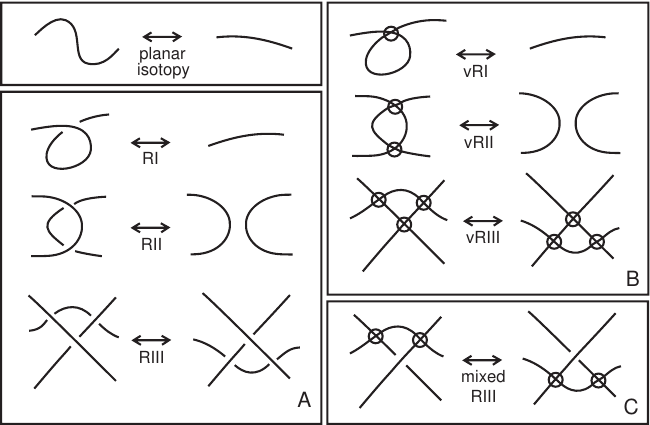}
\caption{{All Reidemeister moves for virtual links} 
\label{pall-1}}   
\end{figure}

Virtual knot and link diagrams that can be related to each other by a finite sequence of 
all Reidemeister moves   drawn in Figure~\ref{pall-1} 
are said to be
\emph{virtually equivalent} or \emph{virtually isotopic}.
The virtual isotopy class of a virtual knot diagram is called a \emph{virtual knot}.

\begin{theorem}[\cite{Kauffman, Kauffmani}]\label{pkihon} 
Two virtual link diagrams are virtually isotopic if and only if their surface embeddings are equivalent up to isotopy in the thickened surfaces, orientation-preserving diffeomorphisms of the surfaces, 
and one-handle stabilizations and destabilizations. 
\end{theorem}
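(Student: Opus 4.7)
The plan is to construct explicit maps in both directions between equivalence classes of planar virtual diagrams and equivalence classes of embedded links in thickened surfaces, and to check that each map respects the relevant relations.

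For the direction from virtual diagrams to surface embeddings, I would associate to a virtual diagram $D$ its \emph{Carter surface} $F_D$, built as follows. Take a ribbon neighborhood of the underlying 4-valent graph of $D$ in the plane: at each classical crossing, use a standard thickened oriented disk carrying the over/under information; at each virtual crossing, glue in a 1-handle that routes one ribbon through the other without intersection. Cap off the boundary circles of the resulting ribbon surface by disks to obtain a closed oriented surface $F_D$, and note that $D$ lifts tautologically to a link $L_D \subset F_D \times [-1,1]$. I would then verify, move by move, that each of the seven Reidemeister moves of Figure~\ref{pall-1} changes the pair $(F_D, L_D)$ only by the three allowed operations: the classical moves R1, R2, R3 are local isotopies of $L_D$ with $F_D$ unchanged; the virtual moves vR1, vR2, vR3 alter $F_D$ by an isotopy or diffeomorphism in the complement of $L_D$; and the mixed ``detour'' move corresponds to a destabilization followed by a restabilization, which is covered by the equivalence relation.

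For the reverse direction, starting from an oriented surface link $L \subset F \times [-1,1]$, I would first produce a diagram $D_F$ on $F$ by generic vertical projection, and then cut $F$ open along a system of disjoint simple closed curves in $F \setminus D_F$ whose complement is a disk; arcs of $D_F$ crossing a cut become virtual crossings in the resulting planar diagram $D$. I would then check that changing the cutting system, moving $D_F$ by an ambient isotopy on $F$, applying an orientation-preserving diffeomorphism of $F$, or performing a stabilization/destabilization disjoint from the diagram, each changes $D$ only by a finite sequence of Reidemeister moves from Figure~\ref{pall-1}. The key local computation here is that a detour of an arc over a cutting curve is realizable by vR2, vR3, and the mixed move.

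The main obstacle is to show the two maps are mutually inverse on equivalence classes; concretely, that two diagrams on $F$ representing isotopic links give virtually isotopic planar diagrams. This needs a version of the Reidemeister theorem for surfaces other than $S^2$ (a generic isotopy of $L$ in $F \times [-1,1]$ projects to a sequence of classical R-moves on $F$), combined with the observation above about detour moves. A subtle point to handle carefully is the mixed move: one has to check that an arc of $D_F$ can be slid across a cut curve without affecting classical crossings, which ultimately reduces to the fact that the only new intersections created are virtual and are cancellable by vR2. Once these two reductions are carried out, the theorem follows by assembling the move-by-move verifications.
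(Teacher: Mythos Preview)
The paper does not prove this theorem: it is quoted as a result of Kauffman with a citation to \cite{Kauffman, Kauffmani} and no argument is given. There is therefore nothing in the present paper to compare your proposal against.

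That said, your outline is essentially the standard proof found in those references: build the abstract ribbon (Carter) surface from a virtual diagram, check move by move that the generating Reidemeister moves induce only isotopy, surface diffeomorphism, or (de)stabilization on the thickened-surface side, and conversely project a link in $F\times[-1,1]$ to a diagram on $F$, cut $F$ open to a disk, and verify that the choices involved and the allowed surface operations alter the resulting planar diagram only by virtual Reidemeister moves. One small inaccuracy: the mixed move does not in general require a destabilization followed by a restabilization on the surface side; since a virtual crossing is merely an artifact of how the ribbon surface sits over the plane, sliding a strand consisting only of virtual crossings past a classical crossing is already an isotopy of the link in $F\times[-1,1]$ (equivalently, an isotopy of the abstract ribbon surface). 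Apart from that, the sketch is sound and matches the argument in the cited sources.
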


A reason  why links in thickened surfaces 
and virtual links are important is as follows. 
It is an open question 
whether the Jones polynomial~\cite{Jones}, Khovanov homology~\cite{B, Khovanov1999}, and the
Khovanov--Lipshitz--Sarkar homotopy type~\cite{LSk, LSs, LSr, Seed} 
can be extended to links in three-manifolds other than the three-sphere.
This has been done in the case of links in thickened surfaces; see
\cite{APS, DKK, Kauffman, Kauffmani, KauffmanOgasasq, KauffmanNikonovOgasa, KauffmanNikonovOgasaT2, Man, MN, Igor, Ru, Tub}. 
Some of these results rely on virtual links. 
The above question is still open for other three-manifolds. 
See the discussions in \cite[Introduction and Appendix]{DKO}\cite{KauffmanOgasaquantum}.


\subsection{The linking number of virtual links}\label{subseclk}
 
In general, we cannot define the linking number for links 
in compact, oriented 3-manifolds.  
However, we can define it in the case of links in thickened surfaces 
and for virtual links. We review this construction in this section; 
see 
\cite{DKO, IMvbunrui, Kauffman, Kauffmani, Mvbunrui} for detail.  

\begin{definition}\label{defbe}
Let $J \cup K$ be a two-component link in a thickened surface $F\x [-1,1]$, 
where $J$ and $K$ may not be  null-homologous. 
We define the \emph{linking number} lk$(J,K)$ of $J$ and $K$ 
as follows.
Take the projection of $J \cup K$ into $F\x\{-1\}$.  
Assume that the projection map is a self-transverse immersion. 
We give each crossing of $J$ and $K$ a sign $+1$ or $-1$  
using the orientations of $J$, $K$ and $F\x\{-1\}$. 
The \emph{linking number} lk$(J,K)$ is 
 half of the sum of the signs
 of all crossings between $J$ and $K$.  
\end{definition}

\begin{remark} 
\begin{enumerate}	
\item The above linking number may be a half-integer. 
The linking number of the \emph{virtual Hopf link} shown in Figure~\ref{fignori} is $1/2$. In this case, $F$ is a torus and there is a single positive crossing.
If $F$ is the 2-sphere, the linking number is always an integer.

\begin{figure}
\begin{center}  
	\includegraphics{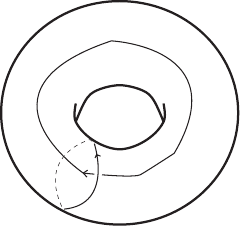}
\end{center}
\caption{{A link in the thickened torus. The linking number is 
$\frac{1}{2}$.}\label{fignori}}   
\end{figure}

\item If $F$ is the two-sphere, embed $F\x[-1,1]$ in the three-sphere naturally. 
Regard a link $J \cup K$ in the thickened two-sphere as a link in the three-sphere. 
Then the classical linking number of the link $J \cup K$ in the two-sphere 
is equal to the linking number lk$(J,K)$ in Definition~\ref{defbe}.
\end{enumerate}
\end{remark}

If a link $J \cup K$ in $F\x[-1,1]$ is virtually equivalent to 
a link $J' \cup K'$ in $F'\x[-1,1]$, 
then we have lk$(J,K) = \text{lk}(J',K')$.   
Therefore, the linking number descends to virtual links:  

\begin{definition}\label{defVlk0}
Let $\mathcal J \cup  \mathcal K$ be 
a two-component virtual link represented by 
a link $J \cup K$ in $F\x[-1,1]$. 
Define the \emph{linking number} lk$(\mathcal J, \mathcal K)$ 
to be lk$(J,K)$.
\end{definition} 

Let $P \cup Q$ be a virtual link diagram in the plane that represents a virtual link 
$\mathcal P \cup \mathcal Q$. 
Each classical crossing of $P$ and $Q$ is oriented using 
the orientations of $P$, $Q$, and the plane. 
Assign to a positive classical crossing the sign $+1$, 
to a negative classical crossing $-1$, and to a virtual crossing $0$.  Then the linking number  lk$(\mathcal P, \mathcal Q)$  is
half of the sum of the signs associated with each crossing.  

Consider the diagrams of virtual Hopf links shown in Figure \ref{figVHopf}. 
The linking number of the virtual link on the left is 
$-1/2$. The linking number of the virtual link on the right is $1/2$. 

\begin{figure}
\centering
\includegraphics[width=120mm]{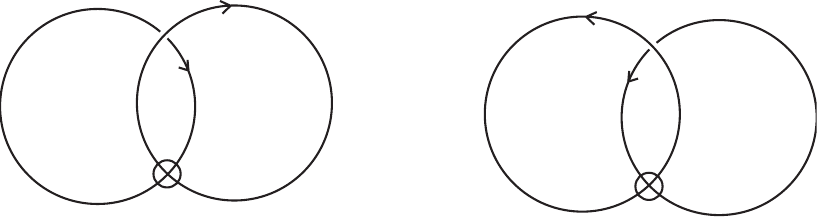}
\caption{{Virtual Hopf links}\label{figVHopf}}   
\end{figure}

\section{Twin Seifert surfaces  for links in thickened surfaces}\label{secwakatta}

\subsection{The $d$-th Alexander polynomial associated with a Seifert surface}\label{subsecdA}

We will introduce new Alexander polynomials 
for all links in thickened surfaces. 
However, we begin with the case of 
null-homologous links; compare with~\cite{BCG, BGHNW}.

\begin{definition}
	Let $L$ be a null-homologous link in a thickened surface $\mathcal F=F\x [-1,1]$. Then a \emph{Seifert surface} $V$ for $L$ is a compact, connected, and oriented surface embedded in the interior of $\mathcal{F}$ with boundary $L$.
	
	Let $\{b_1, \dots, b_n\}$ be a basis of $H_1(V)$, and choose oriented simple closed curves $\beta_1, \dots, \beta_n$ in $V$ representing $b_1, \dots, b_n$, respectively . Then the \emph{Seifert matrix} of $V$ with respect to this basis is a $n \times n$ matrix with $(i,j)$-th entry $\text{lk}(\beta_i, \beta_j^+)$, where $\beta_j^+$ is the positive push-off of $\beta_j$, and the linking number is the one given in Definition~\ref{defbe}. Note that components of a Seiferet matrix may be half-integers.
\end{definition}

Suppose that the link  $L$ in a thickened surface $\mathcal F=F\x [-1,1]$ has a Seifert surface $V$. 
Then, as in the case of classical links (see Crowell--Fox~\cite[p.~119]{CF}), we can consider a {\it Seifert matrix} $X$, and use it to define the {\it $d$-th Alexander polynomial} $\Delta_{K, V, d}$ associated with $V$ and $d \in \Z$. Let $n$ be the size of $X$. If $0 \le d < n$, then it is the greatest common divisor of the $(n - d) \times (n - d)$ minors of $tX - X^T$ in $\Z[1/2][t]$. (Alternatively, it is a generator of the smallest principal ideal containing the ideal generated by all $(n-d) \times (n-d)$ minors.)
If $d < 0$, then we let $\Delta_{K, V, d} := 0$. For $d > n$, we write $\Delta_{K, V, d} := 1$.
This only depends on the {\it S-equivalence} class of the Seifert matrix; see \cite{Levinesimp, LevineAct, Murasugi, Trotter}.  
(Levine~\cite{Levinesimp} uses the term ``equivalence'' for  ``S-equivalence.'') 
The zeroth Alexander polynomial is known simply as the {\it Alexander polynomial}. Note that some authors use $(n-d+1) \times (n-d+1)$ minors instead of $(n-d) \times (n-d)$ minors to define $\Delta_{K, V, d}$; compare \cite[Remark~11.5.1]{OSS} with \cite[Definition~6.2]{Lickorish}. The ring $\Z[1/2]$ is a Euclidean domain, since it is the localization of $\Z$ along the multiplicative set consisting of powers of~$2$.
 
\begin{definition}
Two polynomials $f(t)$, $g(t)\in\Z[1/2][t]$  
are said to be 
\emph{balanced equivalent} (written $f \dot{=} g$)
if there is 
are integers $n$ and $k$ such that $\pm 2^k t^nf(t)=g(t)$. 
We say that $f(t)$ and $g(t)$ represent the same {\it balance class}.
\end{definition}

\begin{proposition}\label{prV}
Let $L$ and $V$ be as above.  
Assume that another Seifert surface $V'$ is obtained from $V$ by a surgery along an embedded 1-handle in $\mathcal F$. 
Then any Seifert matrix associated with $V$ is $S$-equivalent to that associated with $V'$. 
Therefore, the $d$-th Alexander  polynomial defined by each represents the same balance class. Furthermore, the zeroth Alexander polynomial is well-defined up to multiplication by $t^n$ for $n \in \Z$. 
\end{proposition}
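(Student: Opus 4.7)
The plan is the classical Murasugi--Trotter strategy adapted to the thickened surface setting: exhibit the Seifert matrix of $V'$, in a suitably chosen basis, as an elementary stabilization of the Seifert matrix of $V$. Since $S$-equivalence is generated by congruences with invertible matrices together with such stabilizations, and the $d$-th Alexander polynomial of a Seifert matrix is invariant under these moves up to multiplication by a nonzero rational scalar and a power of $t$ (even when the entries are half-integers), this will give the claim about balance classes.

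I begin with the geometry. The embedded $1$-handle is realised by an arc $\gamma \subset \mathcal{F}$ with endpoints on $V$ and interior disjoint from $V$, together with a tubular neighbourhood $N(\gamma)\cong D^{1}\times D^{2}$; then $V'$ is obtained from $V$ by deleting the two small disks $V\cap N(\gamma)$ and gluing in the annular tube $T := D^{1}\times \partial D^{2}$. A short Euler-characteristic count gives $\operatorname{rank} H_{1}(V')=\operatorname{rank} H_{1}(V)+2$. I choose a basis $e_{1},\dots,e_{n}$ of $H_{1}(V)$ represented by cycles in $V\setminus N(\gamma)$, which automatically furnish cycles on $V'$, and adjoin two new generators: the meridian $m=\{t_{0}\}\times \partial D^{2}$ of the tube, and a longitude $\ell$, namely a simple closed curve on $V'$ traversing $T$ once and closing up through an arc on $V$.

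Next I compute the Seifert pairings in this basis. On the block indexed by the $e_{i}$, the values of $\operatorname{lk}(e_{i},e_{j}^{+})$ reproduce the original matrix $A$, because the cycles and their normal push-offs may be chosen disjoint from the handle. The key geometric input is that $m$ bounds the cocore disk $D:=\{t_{0}\}\times D^{2}\subset\mathcal{F}$, and $D$ is disjoint from $V$ and from every $e_{i}$. Identifying the linking number of a null-homologous circle with the intersection number against a bounding surface, I obtain
\[
\operatorname{lk}(e_{i},m^{+}) \;=\; \operatorname{lk}(m,e_{i}^{+}) \;=\; \operatorname{lk}(m,m^{+}) \;=\; 0
\]
for all $i$, while a transversality count inside the tube yields $\operatorname{lk}(m,\ell^{+})=\pm 1$ after fixing orientations. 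Writing $v_{i}=\operatorname{lk}(e_{i},\ell^{+})$, $w_{j}=\operatorname{lk}(\ell,e_{j}^{+})$, and $c=\operatorname{lk}(\ell,\ell^{+})$, the Seifert matrix of $V'$ in the basis $\{e_{1},\dots,e_{n},\ell,m\}$ takes the block form
\[
A' \;=\; \begin{pmatrix} A & v & 0 \\ w^{T} & c & 0 \\ 0 & \pm 1 & 0 \end{pmatrix},
\]
which (after reordering the last two basis vectors and the usual basis change that tidies the new row and column) is the standard stabilization move of \cite{Levinesimp,Murasugi,Trotter}. Thus $A$ and $A'$ are $S$-equivalent, and their $d$-th Alexander polynomials coincide in $\mathbb{Q}[t]$ up to a nonzero rational scalar and a power of $t$, which is precisely balance equivalence.

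The main obstacle I anticipate is the verification, in the thickened surface, of the identity $\operatorname{lk}(\alpha,\beta)=\alpha\cdot S$ whenever $\beta=\partial S$ for an embedded surface $S\subset\mathcal{F}$ disjoint from $\alpha$, since Definition~\ref{defbe} is diagrammatic and linking numbers here may take half-integer values. I would dispatch this as a short preparatory lemma: the statement is local to a neighbourhood of $S$, and within such a neighbourhood it reduces to a single signed crossing between the projection of $\alpha$ and $\beta$. With that lemma in hand, the three vanishing statements above are immediate consequences of the facts that $D$ lies in $\mathcal{F}\setminus V$ and that a normal push-off of $D$ can be arranged disjoint from every $e_{i}$ and from $m$ itself.
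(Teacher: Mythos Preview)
Your proposal is correct and follows precisely the route the paper intends: the paper's own ``proof'' of this proposition is simply the sentence that it is a natural generalisation of the classical argument in \cite{Levinesimp, LevineAct, Murasugi, Trotter}, and what you have written is exactly that classical Murasugi--Trotter--Levine computation carried out in the thickened-surface setting. Your identification of the one genuinely new point---that the diagrammatic linking number of Definition~\ref{defbe} agrees with the algebraic intersection number with a bounding surface when one of the curves is null-homologous---is apt, and the local argument you sketch for it is adequate, since the meridian $m$ bounds a disc contained in a ball.
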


The proof is a natural generalization of the analogous result for classical links; see \cite{Levinesimp, LevineAct, Murasugi, Trotter}.
 
\begin{example}\label{prT}   
Let $L$ be a link in a thickened surface
and let $U$ and $V$ be Seifert surfaces for $L$.  
In general, the Alexander polynomials defined by $U$ and $V$ are not balanced equivalent.
Indeed, let $U$ be a disc in $T^2\x\{0\}\subset T^2\x[-1,1]$, 
and let $L :=\partial U$. Then $L$ is the trivial knot in $T^2\x[-1,1]$.    
Let 
\[
V := (T^2\x\{0\}) \setminus \text{Int}(U);
\] 
see Figure~\ref{figT2}. 
Note that $U$ and $V$ are Seifert surfaces for $L$.

\begin{figure}
	\centering\includegraphics[width=40mm]{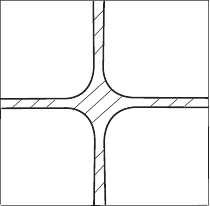}
	\caption{The trivial knot $K$ in the thickened torus, and its Seifert surface $V$.}\label{figT2}   
\end{figure}
    
A Seifert matrix defined by $U$ is the empty matrix. 
A Seifert matrix $X$ defined by $V$ is 
\[
\left(
    \begin{array}{cc}
0& 1/2\\
-1/2 &0\\
    \end{array}
  \right). 
\]
The Alexander polynomials associated with $U$ and $V$ are the balance classes of 1 and 
$\det (t X-X^T) =\frac{1}{4}(t+1)^2$, respectively,
where $X^T$ is the transpose of $X$. 
\end{example}

Let $L$ be a link in a thickened surface and
$U$ and $V$  Seifert surfaces for $L$.     
We may have $(U \setminus L)\cap(V \setminus L) \neq \emptyset$.  
Suppose that the two-cycle $U \cup -V$ is null-homologous in  $\mathcal F$.  
Then $U$ is obtained from $V$ by a sequence of surgeries along embedded one-handles and two-handles. 
This is proved using the same method as
\cite[Theorem~1]{Levinesimp}. 
By Proposition~\ref{prV}, we have the following.

\begin{proposition}\label{prNl}
Let $L$, $U$, and $V$ be as above. 
Then any Seifert matrices associated with $U$ and $V$ are $S$-equivalent. 
Therefore, the $d$-th Alexander  polynomial defined by each represents the same balance class.
\end{proposition}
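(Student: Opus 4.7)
The strategy is exactly the one foreshadowed in the sentences preceding the statement: reduce to Proposition~\ref{prV} by establishing that $V$ can be turned into $U$ by a sequence of ambient $1$-handle and $2$-handle surgeries inside $\mathcal{F} = F\times[-1,1]$, fixing the common boundary $L$. Once this is done, the first (S-equivalence) conclusion follows by induction on the number of surgeries, and the second (Alexander polynomial) conclusion is immediate from the invariance of the $d$-th Alexander polynomial balance class under S-equivalence, as cited from \cite{Levinesimp, LevineAct, Murasugi, Trotter}.

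The plan for the key geometric step is to mimic Levine's argument \cite[Thm.~1]{Levinesimp}. First I would put $U$ and $V$ in general position in $\mathcal{F}$, so that $U\cap V$ consists of $L$ together with a disjoint union of simple closed curves in the interior and proper arcs with endpoints on $L$. The hypothesis that $U\cup -V$ is null-homologous in $\mathcal{F}$ produces a $3$-chain $W$ with $\partial W = U - V$, which plays the role of the cobounding region Levine exploits in $S^3$. Using $W$, I would construct intermediate Seifert surfaces $V = V_0, V_1,\dots,V_n = U$ for $L$, each obtained from the previous one by an ambient surgery along an embedded $1$-handle or $2$-handle in $\mathcal{F}$; this is the same inductive cut-and-paste along components of $U\cap V$ (innermost curves first, then arcs) as in the classical case.

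Given such a sequence, the $1$-handle surgeries are covered directly by Proposition~\ref{prV}, which asserts that the Seifert matrices before and after are S-equivalent. For a $2$-handle surgery taking $V_i$ to $V_{i+1}$, I would observe that the inverse move is a $1$-handle surgery taking $V_{i+1}$ to $V_i$; since S-equivalence is symmetric, Proposition~\ref{prV} again yields S-equivalence of the two Seifert matrices. Concatenating along the chain $V_0,\dots,V_n$ then gives S-equivalence of any Seifert matrix for $V$ with any Seifert matrix for $U$, which is the first conclusion. The second conclusion follows because the $d$-th Alexander polynomial of a Seifert matrix depends, up to balanced equivalence, only on its S-equivalence class.

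The main obstacle is the geometric step, namely verifying Levine's argument in the present setting. In $S^3$, the cobounding region between two Seifert surfaces is automatically embedded, and its handle structure feeds directly into the surgery description; in $\mathcal{F}$, one has only the $3$-chain $W$ coming from the null-homology hypothesis, and one must check that the inductive simplification of $U\cap V$ can be realized by genuine embedded handles rather than merely abstract $2$-chain manipulations. This is where care is required, but the hypothesis $[U\cup -V]=0\in H_2(\mathcal{F};\Z)$ is precisely what is needed for the innermost-curve/outermost-arc argument of \cite[Thm.~1]{Levinesimp} to go through verbatim in $F\times[-1,1]$.
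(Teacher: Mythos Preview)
Your proposal is correct and follows exactly the approach the paper indicates: the paper's proof is the paragraph immediately preceding the proposition, which asserts that the null-homology hypothesis on $U\cup -V$ allows one to pass from $V$ to $U$ by a sequence of embedded one- and two-handle surgeries via the method of \cite[Theorem~1]{Levinesimp}, and then invokes Proposition~\ref{prV}. You have simply unpacked that paragraph in more detail, including the observation that two-handle surgeries are inverses of one-handle surgeries and hence also preserve the S-equivalence class.
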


\begin{definition}\label{def:type}
Let $L$ be a null-homologous link in the thickened surface $F \times [-1, 1]$ (we do not consider $L$ up to isotopy now). 
Then there is a two-chain $\varsigma \in C_2(F \times [-1,1])$ such that $\partial\varsigma=L$,   
 where $\partial$ denotes the differential acting on chains.  
Let $p \in F$ be a point such that $(\{p\} \times [-1,1]) \cap L = \emptyset$. 
Suppose that $\{p\} \x  [-1,1]$ and $\varsigma$ intersect transversely
algebraically $n$ times. 
Then we call $\varsigma$ a {\it type $n$ two-chain for $L$ with respect to $p$}. 
\end{definition}

\begin{claim}\label{cla2ch} 
Let $L$ and $p$ be as above. 
Assume that $U$ and $V$
are type $n$ Seifert surfaces for $L$ with respect to $p$.
\begin{enumerate}
\item \label{it:intersection}
Then the two-cycle $\gamma = U \cup -V$  
and $\{p\} \x  [-1,1]$ 
have zero algebraic intersection number. 
That is, the following intersection product is zero: 
\[
\begin{split}
H_1(F\x [-1,1], F\x\{-1,1\})\x H_2(F) &\to \Z \\
[\{p\} \x [-1,1], \{p\} \x\{-1,1\}] \cdot\gamma &\mapsto 0.
\end{split}
\]
Furthemore, $\gamma$ is null-homologous in $F\x [-1,1]$. 

\item\label{it:S-equivalence} 
Any Seifert matrices associated with $U$ and $V$ are $S$-equivalent. 
Therefore the $d$-th Alexander  polynomial defined by each represents the same balance class.
\end{enumerate}
\end{claim}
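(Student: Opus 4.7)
My plan is to dispatch part \eqref{it:intersection} by a direct intersection-number computation together with a Poincaré–Lefschetz duality argument, and then obtain part \eqref{it:S-equivalence} as a direct consequence of Proposition~\ref{prNl}.

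First I would establish the vanishing of the algebraic intersection. Because $U$ and $V$ are both \emph{type $n$} Seifert surfaces for $L$ with respect to $p$ in the sense of Definition~\ref{def:type}, the arc $\{p\}\times[-1,1]$ meets $U$ algebraically $n$ times and meets $V$ algebraically $n$ times. By bilinearity of the relative intersection pairing,
\[
[\{p\}\times[-1,1]]\cdot\gamma \;=\; [\{p\}\times[-1,1]]\cdot[U]\;-\;[\{p\}\times[-1,1]]\cdot[V]\;=\;n-n\;=\;0.
\]
Note that $\partial\gamma=\partial U-\partial V=L-L=0$, so $\gamma$ is genuinely a 2-cycle in $F\times[-1,1]$, and the intersection product above is well-defined.

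Next I would argue null-homology. Since $F$ is a closed oriented surface, $H_2(F\times[-1,1])\cong H_2(F)\cong\Z$, generated by the class $[F\times\{0\}]$. The intersection pairing
\[
H_1\bigl(F\times[-1,1],\,F\times\{-1,1\}\bigr)\;\times\;H_2(F\times[-1,1])\;\longrightarrow\;\Z
\]
is the Poincaré–Lefschetz duality pairing, and $[\{p\}\times[-1,1]]\cdot[F\times\{0\}]=\pm 1$, so pairing with $[\{p\}\times[-1,1]]$ identifies $H_2(F\times[-1,1])$ with $\Z$. By the previous paragraph, $\gamma$ pairs to zero, hence $[\gamma]=0$ in $H_2(F\times[-1,1])$, completing part \eqref{it:intersection}.

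Finally, part \eqref{it:S-equivalence} follows immediately: we have just shown that the two-cycle $U\cup -V$ is null-homologous in $\mathcal{F}=F\times[-1,1]$, which is exactly the hypothesis of Proposition~\ref{prNl}, so the Seifert matrices associated with $U$ and $V$ are $S$-equivalent, and hence the $d$-th Alexander polynomials computed from either one lie in the same balance class. The only step with any content is the duality step in the middle paragraph; the main subtlety is making sure one interprets the intersection number as the Poincaré–Lefschetz pairing between $H_2$ and the relative $H_1$, rather than between two absolute classes (where it would vanish for trivial reasons), but since $\{p\}\times[-1,1]$ is a properly embedded arc representing a generator of the relative $H_1$ dual to $[F\times\{0\}]$, this is routine.
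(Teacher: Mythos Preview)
Your proof is correct and follows essentially the same approach as the paper's: both compute the intersection number with the arc $\{p\}\times[-1,1]$ as $n-n=0$, invoke Poincar\'e--Lefschetz duality (using that the relative $H_1$ is $\Z$ generated by the arc class, dual to $[F]$ generating $H_2$) to conclude $\gamma$ is null-homologous, and then apply Proposition~\ref{prNl} for part~\eqref{it:S-equivalence}. Your write-up is slightly more explicit about the duality step, but the argument is the same.
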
 

\begin{proof} 
The arc $a := \{p\} \x  [-1,1]$ has algebraic intersection number $n$ with both $U$ and $V$. Hence $a$ and $\gamma$ 
have zero algebraic intersection number. Since 
\[
H_1(F\x [-1,1], F\x\{-1,1\}) \cong \Z 
\]
and is generated by $[a, \partial a]$,
it follows from Poincar\'e--Lefschetz duality that $\gamma$
is null-homologous, which concludes the proof of part~\eqref{it:intersection}. 
Part~\eqref{it:S-equivalence} now follows from Proposition~\ref{prNl}
\end{proof}

\subsection{Twin Seifert surfaces}\label{subsec}

Let $K$ be an oriented  knot in a thickened surface $\mathcal F=F\x[-1,1]$. We do not require $K$ to be null-homologous 
in $\mathcal F$, and do not consider it up to isotopy for now.

Consider $-K$, which is $K$ with its orientation reversed. 
We denote by $\mathcal F_{-K}$ a copy of $\mathcal F$ that includes $-K$.
In $\mathcal F \sqcup \mathcal F_{-K}$, identify $F\x\{-1\} \subset \mathcal F$ with $F\x\{1\} \subset \mathcal F_{-K}$, 
and call the result $F\x I$, where $I$ is obtained by gluing two copies of $[-1,1]$. We obtain the link $K\sqcup -K$ in $F\x I$. 

If we reverse the orientation of $[-1,1]$ in  $\mathcal F$,
then the orientation of  $\mathcal F$ is reversed. 
Call this manifold $\mathcal F_{-K^\ast}$.
If we reverse the orientation of $K$ as well, 
then we obtain the knot $-K^\ast$ in $\mathcal F_{-K^\ast}$.
In $\mathcal F \sqcup \mathcal F_{-K^\ast}$, identify
$F\x\{-1\} \subset \mathcal F$ 
with $F\x\{-1\} \subset \mathcal F_{-K^\ast}$. 
Call the result $F\x I$, where $I$ is made from two copies of $[-1,1]$. 
We obtain the link $K\sqcup -K^\ast$ in $F\x I$.
(Note the difference between 
$\mathcal F \cup \mathcal F_{-K}$ 
 and  $\mathcal F \cup\mathcal F_{-K^\ast}$.)  

\begin{figure}
\begingroup%
  \makeatletter%
  \providecommand\color[2][]{%
    \errmessage{(Inkscape) Color is used for the text in Inkscape, but the package 'color.sty' is not loaded}%
    \renewcommand\color[2][]{}%
  }%
  \providecommand\transparent[1]{%
    \errmessage{(Inkscape) Transparency is used (non-zero) for the text in Inkscape, but the package 'transparent.sty' is not loaded}%
    \renewcommand\transparent[1]{}%
  }%
  \providecommand\rotatebox[2]{#2}%
  \newcommand*\fsize{\dimexpr\f@size pt\relax}%
  \newcommand*\lineheight[1]{\fontsize{\fsize}{#1\fsize}\selectfont}%
  \ifx\svgwidth\undefined%
    \setlength{\unitlength}{237.48943801bp}%
    \ifx\svgscale\undefined%
      \relax%
    \else%
      \setlength{\unitlength}{\unitlength * \real{\svgscale}}%
    \fi%
  \else%
    \setlength{\unitlength}{\svgwidth}%
  \fi%
  \global\let\svgwidth\undefined%
  \global\let\svgscale\undefined%
  \makeatother%
  \begin{picture}(1,0.38887011)%
    \lineheight{1}%
    \setlength\tabcolsep{0pt}%
    \put(0,0){\includegraphics[width=\unitlength,page=1]{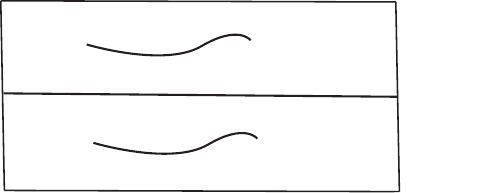}}%
    \put(0.53152924,0.29704542){\makebox(0,0)[lt]{\lineheight{1.25}\smash{\begin{tabular}[t]{l}$K$\end{tabular}}}}%
    \put(0.54279498,0.09801712){\makebox(0,0)[lt]{\lineheight{1.25}\smash{\begin{tabular}[t]{l}$K^@$\end{tabular}}}}%
    \put(0.82631642,0.27451389){\makebox(0,0)[lt]{\lineheight{1.25}\smash{\begin{tabular}[t]{l}$\mathcal F$\end{tabular}}}}%
    \put(0.82631642,0.08487383){\makebox(0,0)[lt]{\lineheight{1.25}\smash{\begin{tabular}[t]{l}$\widehat{\mathcal{F}}$\end{tabular}}}}%
  \end{picture}%
\endgroup%

\caption{{The link 
 $K\sqcup K^{@}$ in $F\x I=\mathcal F\cup\widehat{\mathcal F}$. 
}\label{figshogatsu}
}   
\end{figure}

\begin{figure}
	\includegraphics[width=3cm]{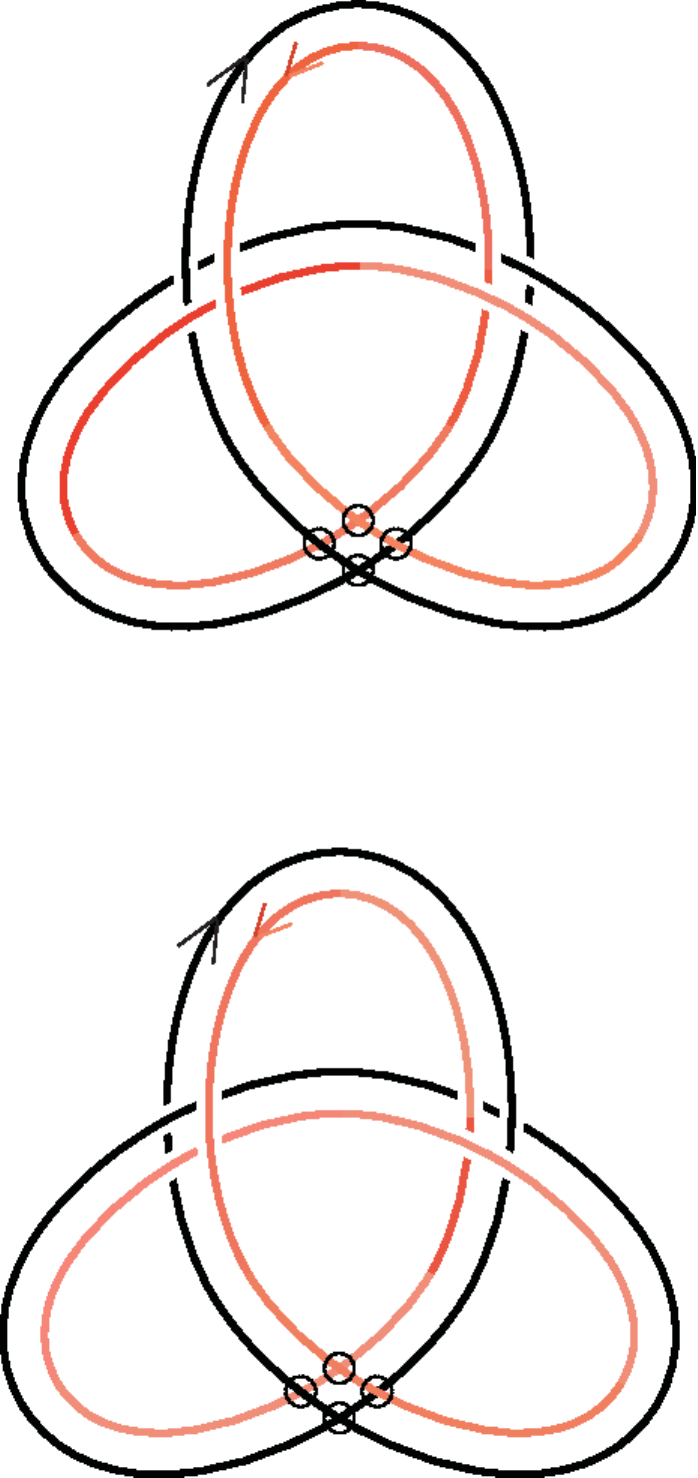}
	\caption{Virtual diagrams of the links $K \sqcup -K$ (top) and $K \sqcup -K^*$ (bottom) in $T^2 \times I$, where $K$ is the component in red.}\label{fig:trefoil}
\end{figure}

Let $K^{@}$ stand for either $-K$ or $-K^\ast$. 
Let 
 $\widehat{\mathcal F}$ 
denote $\mathcal F_{K^{@}}$. 
Then $F\x I=\mathcal F\cup \widehat{\mathcal F}$; 
see Figure~\ref{figshogatsu} for a schematic picture. 
Since  $K\sqcup K^{@}$ is a vanishing one-cycle in  $F\x I$, 
there is a Seifert surface $W$ for $K\sqcup K^{@}$.
See Figure~\ref{fig:trefoil} for a concrete example that shows $K \sqcup -K$ and $K \sqcup -K^*$ in the case when $F = T^2$ and $K$ is the virtual trefoil.

\begin{remark} 
We do not consider the case where $K^{@}=K^\ast$, since $K\sqcup K^\ast$ is not null-homologous in $F\x I$. 
Therefore $K\sqcup K^\ast$ does not admit a Seifert surface.
We need a Seifert surface in order to define the twin Alexander polynomial.
\end{remark}

The following result is used in the proof of Theorem~\ref{mthhayaku}.

\begin{proposition}\label{prop:mirror}
	Let $K$ be a knot in the thickened surface $F \times [-1,1]$, and let $K \sqcup K^*$ and $K \sqcup -K^*$ be as above. Then the following are equivalent:
	\begin{enumerate}
		\item\label{it:one} the knots $K$ and $K^*$ are isotopic,
		\item\label{it:two} the knots $-K$ and $-K^*$ are isotopic,
		\item\label{it:three} the links $K \sqcup -K$ and $K \sqcup -K^*$ are isotopic, where this isotopy does not necessarily preserve the order of the components. 
	\end{enumerate}
\end{proposition}

\begin{proof}
	It is clear that \eqref{it:one} implies \eqref{it:two} and \eqref{it:two} implies \eqref{it:three}. To see that \eqref{it:three} implies \eqref{it:one}, first suppose that the isotopy preserves the order of the components of $K \sqcup -K$ and $K \sqcup -K^*$. Then $-K$ and $-K^*$ are isotopic, so $K$ and $K^*$ are also isotopic. If the isotopy reverses the order of the components, then $K$ and $-K^*$ are isotopic and $-K$ and $K$ are isotopic. By the latter, $-K^*$ is isotopic to $K^*$, which implies that $K$ is isotopic to $K^*$. 
\end{proof}

\begin{lemma}\label{lem:well-defined}
	Let $W \subset F \times I$ be a Seifert surface for the link $L := K \sqcup K^@$, and choose a point $p \in F$ such that 
	$(\{p\} \times I) \cap L = \emptyset$.
	Then the type of $W$ in the sense of Definition~\ref{def:type} is independent of the choice of $p$.  
\end{lemma}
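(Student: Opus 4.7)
The plan is to interpolate between $p$ and $p'$ by an arc in $F$, use the resulting vertical rectangle as a cobordism between $\{p\}\times I$ and $\{p'\}\times I$, and thereby reduce the statement to showing that $L$ has vanishing algebraic intersection with this rectangle. That vanishing will follow from the defining property that the projection of $K^@$ to $F$ is the same immersed curve as the projection of $K$, but traversed with reversed orientation.

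To begin, since $F$ is connected, I would pick a smooth embedded arc $\gamma\subset F$ from $p$ to $p'$ in general position with respect to the image $\pi(L)\subset F$, where $\pi\colon F\times I\to F$ is the projection. After a small perturbation one may assume in addition that $W$ is transverse to the rectangle $\Sigma:=\gamma\times I$, that $\Sigma$ is transverse to $L$, and that $W$ is disjoint from a collar of $F\times\partial I$.

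Next, the set $C:=\Sigma\cap W$ is a compact oriented $1$-manifold, and its boundary has exactly three sources: the vertical side $\{p\}\times I$ of $\partial\Sigma$ contributes the signed points $a_p\cdot W$, where $a_q:=\{q\}\times I$; the other vertical side $\{p'\}\times I$ contributes $a_{p'}\cdot W$; and $\Sigma\cap\partial W=\Sigma\cap L$ contributes $\Sigma\cdot L$. The horizontal sides $\gamma\times\{\pm 1\}$ contribute nothing because $W$ avoids $F\times\partial I$. Since the algebraic count of the oriented boundary of a compact $1$-manifold vanishes, careful sign bookkeeping gives
\[
(a_{p'}\cdot W)-(a_p\cdot W)\;=\;\pm\,(\Sigma\cdot L),
\]
and it remains to show that $\Sigma\cdot L=0$.

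This is the crux. Because $\pi$ is a trivial fibration, the signed intersection $\Sigma\cdot L$ in $F\times I$ equals $\pi(L)\cdot\gamma$ computed in $F$, point by point: a transverse intersection $y\in\Sigma\cap L$ with $\pi(y)=x$ contributes the same sign as the corresponding crossing of $\pi(L)$ with $\gamma$ at $x$. Writing $\pi(L)=\pi(K)+\pi(K^@)$, in both admissible cases $K^@\in\{-K,-K^\ast\}$ the projection $\pi(K^@)$ traces the same immersed curve in $F$ as $\pi(K)$ but with reversed orientation. Consequently, at every crossing of $\gamma$ with this common image, the contributions from $\pi(K)$ and $\pi(K^@)$ have opposite signs and cancel, so $\pi(L)\cdot\gamma=0$ and hence $\Sigma\cdot L=0$. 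The step I expect to require the greatest care is this last one, in particular verifying in the mirrored case $K^@=-K^\ast$ that the orientation of $\mathcal F_{-K^\ast}$ (opposite to that of $\mathcal F$) combines with the reversal of $K^\ast$ to give $\pi(K^@)=-\pi(K)$ as oriented immersed curves in $F$.
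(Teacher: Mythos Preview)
Your argument is correct, and you have identified the key geometric fact: $\pi(K^@)=-\pi(K)$ as oriented immersed curves in $F$ (in both the $-K$ and $-K^\ast$ cases), so that $\pi(L)$ is a null $1$-cycle. Your worry about the mirrored case is well-placed but easily resolved exactly as you outline: reflecting in the $I$-direction does not change the projection to $F$, and then reversing the orientation of the knot reverses the projected curve.

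The paper proves the same lemma by a different, shorter route. Rather than interpolating with a rectangle $\Sigma=\gamma\times I$ and counting boundary points of $\Sigma\cap W$, it simply pushes $W$ forward under $\pi\colon F\times I\to F$. The algebraic intersection of $W$ with $\{p\}\times I$ equals the local multiplicity of the $2$-chain $\pi(W)$ at $p$. Since $\partial(\pi(W))=\pi(\partial W)=\pi(K)+\pi(K^@)=0$, the chain $\pi(W)$ is a $2$-cycle in $F$, hence homologous to $n[F]$ for some integer $n$; this $n$ is the type and is manifestly independent of $p$. In effect, the paper packages your rectangle argument into a single homological observation: your computation $\Sigma\cdot L=\gamma\cdot\pi(L)=0$ is precisely the statement that $\pi(L)=\partial(\pi(W))$ vanishes, which is what makes $\pi(W)$ a cycle. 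Your approach has the virtue of being completely explicit at the level of intersections; the paper's approach is more concise and avoids the transversality and sign bookkeeping entirely.
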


\begin{proof}
	Let $\pi \colon F \times I \to F$ be the projection.
    The type of $W$ with respect to $p$ is the algebraic intersection number of $W$ and $\{p\} \times I$. This is the same as the multiplicity of the two-chain $\pi(W)$ at $p$. However, $\pi(W)$ is a two-cycle since $\pi(K) = -\pi(K^@)$, and is hence homologous to $n[F]$ for some $n \in \Z$. This $n$ is the type of $W$.
\end{proof}

\begin{definition}\label{defsun}
Let $K$, $K^{@}$, $\mathcal F$, and $\widehat{\mathcal F}$
be as above, where $K^{@} \in \{-K, -K^\ast\}$. 
If $W$ is a type zero Seifert surface for $K \sqcup K^{@}$, 
then we call $W$ a {\it twin Seifert surface} for $K\sqcup K^{@}$. 
\end{definition}

Note that we do not consider $K$ or $K^{@}$ up to isotopy for now. 
Seifert surfaces are connected by definition. In particular, 
twin Seifert surfaces are connected. 
The existence of twin Seifert surfaces follows from the following result:

\begin{proposition}\label{thmmoon}
Let $K$ be a knot in $F\x[-1,1]$, and construct the link $K \sqcup K^@$  in $F\x I$ as above for $K^@ \in \{-K, -K^*\}$. 
For any integer $n$, there is a Seifert surface for $K\sqcup K^{@}$ of type $n$. 
 \end{proposition}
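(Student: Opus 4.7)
The plan is to produce Seifert surfaces of arbitrary type by starting from any Seifert surface and then modifying its type via a tubing construction with parallel copies of the fiber surface $F$. Since $K \sqcup K^{@}$ is null-homologous in $F \x I$ (as noted just before the proposition statement), there exists some Seifert surface $W_0$, whose type $n_0$ is well-defined by Lemma~\ref{lem:well-defined}. It will suffice to show that, given any Seifert surface $W'$ of type $m$, one can construct Seifert surfaces of types $m+1$ and $m-1$; iterating then reaches every integer $n$.

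To build a Seifert surface of type $m+1$ from $W'$, I would first pick $t_0 \in I$ such that $F \x \{t_0\}$ is disjoint from $K \sqcup K^{@}$, which is possible by compactness of the link. Equipped with the natural orientation of $F$, the fiber $F \x \{t_0\}$ meets $\{p\} \x I$ transversely in the single point $(p, t_0)$ with sign $+1$. Next, using general position in the 3-manifold $F \x I$, choose an embedded arc $\gamma$ connecting a point of $W'$ to a point of $F \x \{t_0\}$, transverse to both surfaces at its endpoints and otherwise disjoint from $W' \cup (F \x \{t_0\}) \cup (\{p\} \x I) \cup K \cup K^{@}$. Remove small open disks around the endpoints of $\gamma$ from $W'$ and $F \x \{t_0\}$ and glue in the lateral boundary of a tubular neighborhood of $\gamma$; with orientations chosen consistently across the tube, the resulting surface $W''$ is a connected, oriented Seifert surface for $K \sqcup K^{@}$. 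The algebraic intersection count gives
\[
W'' \cdot (\{p\} \x I) \;=\; W' \cdot (\{p\} \x I) \;+\; (F \x \{t_0\}) \cdot (\{p\} \x I) \;=\; m + 1,
\]
since the tube avoids $\{p\} \x I$ by construction and the removed disks can also be chosen away from it. For type $m-1$, I would repeat the same construction but orient $F \x \{t_0\}$ opposite to the natural orientation of $F$, so that its algebraic intersection with $\{p\} \x I$ is $-1$.

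The main technical point is the general-position argument underpinning the tubing: one must arrange the arc $\gamma$ and its tubular neighborhood to miss $\{p\} \x I$, to meet $W'$ and $F \x \{t_0\}$ only at the designated endpoints, and to admit a coherent choice of orientation across the tube. Since $\{p\} \x I$ is one-dimensional in the three-manifold $F \x I$, a generic arc avoids it, and a sufficiently narrow tube does too. The orientation matching along $\gamma$ dictates which side of $F \x \{t_0\}$ is attached, but both possibilities can be realized by reversing the direction of $\gamma$ when necessary, so this imposes no real obstruction.
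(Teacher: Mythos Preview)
Your argument is essentially correct but takes a different, more constructive route than the paper. The paper's proof is purely homological: since $L = K \sqcup K^@$ is null-homologous it bounds a two-chain $\zeta$, the projection $\pi(\zeta)$ is homologous to $k[F]$ for some integer $k$, and one then simply invokes the existence of an embedded Seifert surface representing the relative class $[\zeta + (n-k)F] \in H_2(F \times I, L)$, which has type $n$ by construction. Your approach makes this last step explicit by tubing a given Seifert surface with parallel copies of the fiber $F$, which is more geometric and hands-on but requires the extra general-position bookkeeping you carry out.

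One point needs sharpening: you choose $t_0$ so that $F \times \{t_0\}$ avoids the link, but you must also ensure it is disjoint from $W'$ itself---otherwise $W''$ inherits the intersection circles of $W'$ with $F \times \{t_0\}$ and fails to be embedded. This is easily arranged: since $W'$ is compact with boundary $K \sqcup K^@$ lying in the interior of $F \times I$, after a small isotopy it sits in some $F \times [a,b] \subset F \times \text{Int}(I)$, and any $t_0 \notin [a,b]$ works. Also, ``reversing the direction of $\gamma$'' does not change which side of $F \times \{t_0\}$ the arc approaches; what you really use is the freedom to have $\gamma$ leave $W'$ from either its positive or its negative side, which is available and suffices to match orientations with either $+F \times \{t_0\}$ or $-F \times \{t_0\}$.
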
  

\begin{proof}
	Write $L := K \sqcup K^@$, and let $\pi \colon F \times I \to F$ be the projection. 
	Since $L$ is null-homologous, there is a two-chain $\zeta \in C_2(F \times I)$ such that $\partial \zeta = L$. Then $\pi(\zeta)$ is homologous to $k[F]$ for some integer $k$. Let $W$ be a Seifert surface for $L$ representing the homology class 
	\[
	[\zeta + (n-k)F] \in H_2(F \times I, L).
	\]
	Then $W$ is type $n$.
\end{proof}

\begin{remark}
We will describe two explicit methods to construct a twin Seifert surface in Sections~\ref{subsec2ways} and~\ref{subsec2waysII}.
We will introduce two other types of Seifert surfaces:
web Seifert surfaces in Section~\ref{sectypeII} 
and bracelet  
Seifert surfaces in Section~\ref{sectypeIII}.
\end{remark}

\subsection{Constructing twin Seifert surfaces, method 1}\label{subsec2ways}

\begin{figure}
	\includegraphics[width=40mm]{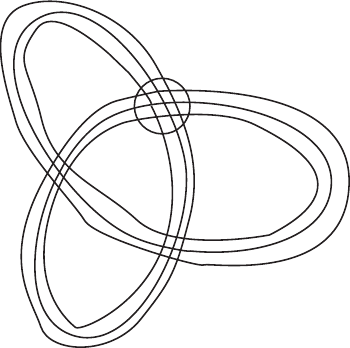}
	\caption{An example of the projection $P$ of $K$ to $F$ drawn using virtual crossings, 
		and the immersed tubular neighborhood $N$ of $P$.}\label{figN}
\end{figure}

Let $K$ be an oriented  knot in a thickened surface 
$\mathcal F=F\x[-1,1]$. 
Let $P$ be the projection of $K$ to $F$.
If we take the oriented smoothing of the crossings of $P$, we obtain an oriented one-manifold $S \subset F$.
Let $N$ be an immersed tubular neighborhood of $P$ in $F$, as in Figure~\ref{figN}, where we draw $P$ and $N$ in the plane using virtual crossings. 
There are compact oriented surfaces $R_\pm$ in $N\x[-1,1]$ 
such that 
\[
\partial R_\pm = K \sqcup (-S \times \{\pm 1\}).
\]
Indeed, attach a half-twisted band to $K$ over each crossing of $P$
that lies in $N \times [-1,1]$, as shown in the left of Figure~\ref{figVAlex}. We write $B$ for the union of the bands. 
Then
\[
R_- := (S \x [-1,0]) \cup B \text{ and } R_+ := (S \times [0,1]) \cup B.
\] 

\begin{figure}
	\includegraphics{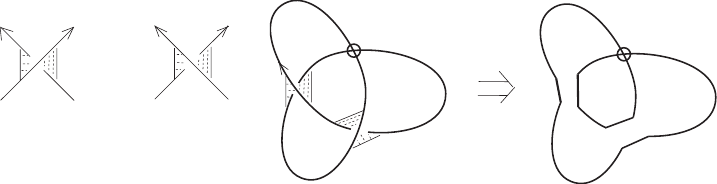}
	\caption{Attaching bands near classical crossings}\label{figVAlex}  
\end{figure}

Consider $K^{@} \in \{-K, -K^\ast\}$ in $\widehat{\mathcal F}=F\x[-1,1]$.
The projection of $K^@$ to $F$ is $-P$, so we obtain $-S$ if we smooth the crossings.
We construct the surface $R^@_\pm \subset \widehat{\mathcal F}$ analogously to $R_\pm$.

Recall that we identify $F\x\{-1\} \subset \mathcal F$ with $F \x \{1\} \subset \mathcal F_{-K}$ when $K^@ = -K$, and with 
$F\x\{-1\} \subset \mathcal F_{-K^\ast}$ when $K^@ = -K^\ast$. 
The result is $F \x I$, where $I$ is the union of two copies of $[-1,1]$.

When $K^@ = -K$, the union $R_- \cup R^@_+$ is a twin Seifert surface  for $K\sqcup -K$ in $F\x I$. 
When $K^@ = -K^\ast$, the union $R_- \cup R^@_-$ is a twin Seifert surface  for $K\sqcup -K^\ast$ in $F\x I$.  
Note that in both cases the Seifert surface lies in $N \x I$, and is hence type zero.

\subsection{Constructing twin Seifert surfaces, method~2}\label{subsec2waysII}
 
\begin{figure}
\includegraphics[width=136mm]{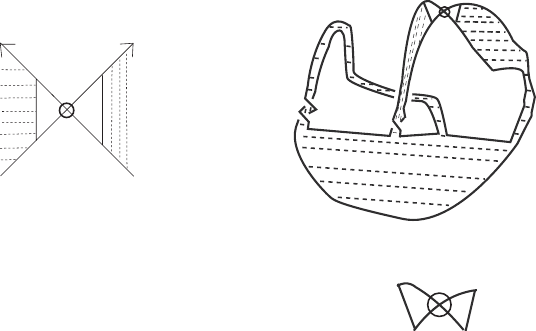}
\caption{{An illustration of method~2 for constructing twin Seifert surfaces. The surface $A$ is shown in the upper right.
}\label{figVAlexY}}   
\end{figure}

Let $K$ be a virtual knot.
We start with a diagram of $K$ with virtual crossings in $\R^2$.
We resolve each virtual crossing as in the left of Figure~\ref{figVAlexY}, by attaching two line segments. Then we get a classical link diagram $D$ (i.e., there are no virtual crossings); see the upper right of Figure~\ref{figVAlexY}.  
Let $A$ be a Seifert surface for $D$. Let $F$ be a surface obtained from $S^2 = \R^2 \cup \{\infty\}$ by attaching a one-handle at each virtual crossing. Then $K$ and $A$ naturally embed into $\mathcal F := F \times [-1,1]$; see the top of Figure~\ref{fig:12}. 

We obtain $A^@$ in $\widehat{\mathcal F}=F\x[-1,1]$ analogously. 
Connect $A$ and $A^@$ by tubes over
the unknots around the virtual crossings of $K$ shown in the lower right of Figure~\ref{figVAlexY}. If there are no virtual crossings,
then we connect $A$ and $A^@$ by 
a surgery along a vertical 3-dimensional 1-handle. 
The result is a twin Seifert surface $W$ for $K\sqcup K^{@}$ in $F\x I$. It is type zero because $W$ is disjoint from $\{\infty\} \times I$.
See Example~\ref{ex:signature} for a concrete application of this method.

\section{The twin Alexander polynomial for knots in thickened surfaces}\label{secAlex}

\begin{proposition}\label{thmsea}  
Let $K$ be a knot in a thickened surface $\mathcal F=F\x[-1,1]$, 
and let $K^@ \in \{-K, -K^*\}$.
Then the S-equivalence class of Seifert matrices  
associated with twin Seifert surfaces for $K\sqcup K^{@}$ is an invariant of the diffeomorphism class of $K$ in $\mathcal F$.
\end{proposition}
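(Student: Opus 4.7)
The plan is to split the statement into two assertions: (i) for a fixed embedding $K \subset \mathcal{F}$, any two twin Seifert surfaces for $K\sqcup K^@$ yield S-equivalent Seifert matrices; and (ii) replacing $K$ by a diffeomorphic knot does not change the resulting S-equivalence class.

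For (i), the key observation is that, by Definition~\ref{defsun}, every twin Seifert surface is type zero, and by Lemma~\ref{lem:well-defined} the type is independent of the chosen basepoint $p\in F$. Hence any two twin Seifert surfaces $W, W'$ for $K\sqcup K^@$ are simultaneously type zero with respect to a common basepoint, so Claim~\ref{cla2ch}\eqref{it:S-equivalence}, applied in the thickened surface $F\times I$, immediately delivers the S-equivalence of the associated Seifert matrices.

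For (ii), let $\phi\colon \mathcal{F}\to\mathcal{F}$ be an orientation-preserving diffeomorphism with $\phi(K)=K'$ (the case of an ambient isotopy is subsumed). The plan is to extend $\phi$ to a diffeomorphism $\Phi\colon F\times I\to F\times I$ that sends $K\sqcup K^@$ to $K'\sqcup (K')^@$. To do this, I would use the standard fact that $\phi$ can be isotoped to have product form $\psi\times\mathrm{id}_{[-1,1]}$ in a collar neighborhood of the gluing boundary $F\times\{-1\}$. Under this arrangement, one can take $\Phi$ to coincide with $\psi\times\mathrm{id}$ on $\widehat{\mathcal{F}}$ (via its defining identification). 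Since the mirror reflection $r(x,t)=(x,-t)$ commutes with product maps, $\Phi$ automatically takes $K^@$ to $(K')^@$ in both cases $@\in\{-,-\ast\}$. Then $\Phi(W)$ bounds $K'\sqcup(K')^@$ with the same Seifert matrix as $W$, and it remains type zero because the algebraic intersection number defining the type is preserved by $\Phi$ (the image of a vertical arc $\{p\}\times I$ is rel-boundary homologous to another vertical arc). Hence $\Phi(W)$ is a twin Seifert surface for $K'\sqcup (K')^@$. Combining with (i) applied now to the link $K'\sqcup(K')^@$ proves that every Seifert matrix coming from a twin Seifert surface for $K'\sqcup(K')^@$ is S-equivalent to that of $W$.

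The main obstacle is the compatibility at the gluing locus $F\times\{-1\}$: a generic diffeomorphism of $\mathcal{F}$ does not directly extend across this hypersurface to $F\times I$. The resolution lies in isotoping $\phi$ so that it has product form near the boundary, a standard feature of thickened surfaces; once this is achieved, the symmetric construction of $\widehat{\mathcal{F}}$ makes the extension canonical and orientation-preserving, and the rest of the argument is formal.
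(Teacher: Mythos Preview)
Your overall strategy---prove well-definedness for a fixed embedding via Claim~\ref{cla2ch}\eqref{it:S-equivalence}, then transport across a diffeomorphism by extending it over $F\times I$---is exactly the paper's approach. Part~(i) is fine. The problem is in your construction of the extension $\Phi$ in part~(ii).

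You extend $\phi$ to $\widehat{\mathcal F}$ by the product map $\psi\times\mathrm{id}$, where $\psi$ is the boundary value of $\phi$ along $F\times\{-1\}$. This glues smoothly, but it does \emph{not} carry $K^@$ to $(K')^@$. The knot $K^@$ sits in the interior of $\widehat{\mathcal F}$ at the location of $K$ (or its mirror), and there $\psi\times\mathrm{id}$ has no reason to agree with $\phi$; in general $(\psi\times\mathrm{id})(K)\neq\phi(K)=K'$, so $\Phi(K^@)\neq (K')^@$. Consequently $\Phi(W)$ is not a Seifert surface for $K'\sqcup(K')^@$ and the argument breaks down. (The remark about $r$ commuting with product maps is true but irrelevant: it would only help if $\phi$ itself were a product map on all of $\mathcal F$.)

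The fix, which is what the paper does, is to use on $\widehat{\mathcal F}$ the map $\phi^@$ \emph{induced by $\phi$} via the defining identification $\widehat{\mathcal F}\cong\mathcal F$ (respectively $\cong\mathcal F$ with the $[-1,1]$-direction reversed). By construction $\phi^@(K^@)=(K')^@$. The gluing issue you identified is then genuine but easy: since $K$ lies in the interior, one may isotope $\phi$ rel $K$ so that its restrictions to the two components of $\partial\mathcal F$ coincide (e.g.\ both equal $\psi\times\{\pm1\}$), and then $\phi$ and $\phi^@$ match along the identified boundary. With this corrected extension your argument goes through verbatim.
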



\begin{proof}
Construct $\widehat{\mathcal F}$ and $F\x I=\mathcal F\cup \widehat{\mathcal F}$ as before. Let $W$ be a twin Seifert surface for $K \sqcup K^@$.
Let $\iota \colon \mathcal F \to \mathcal F$ be a diffeomorphism and write $K_1 := \iota(K)$.
Consider $K_1^@ \in \{-K_1, -K_1^\ast\}$ in $\widehat{\mathcal F}$.
Take a twin Seifert surface $W_1$ for $K_1 \sqcup K_1^{@}$ in $F\x I$. It suffices to show that the S-equivalence classes of Seifert matrices associated with $W$ and $W_1$ agree.

By the symmetry between $\mathcal F$ and $\widehat{\mathcal F}$,  there is an automorphism $\iota^@$ of $\widehat{\mathcal F}$ taking $K^@$ to  $K_1^@$.
We can glue $\iota$ and $\iota^{@}$ to obtain an automorphism $\jmath$ of $F \times I$ such that $\jmath|_{\mathcal F} = \iota$ and $\jmath|_{\widehat{\mathcal F}} = \iota^@$.
By construction, $\jmath(W)$ is a twin Seifert surface for $K_1 \sqcup K_1^{@}$. The result now follows from part~\eqref{it:S-equivalence} of Claim~\ref{cla2ch}. 
\end{proof}

\begin{definition}\label{deftwin}
Let $K$ be a knot in a thickened surface and let $K^@ \in \{-K, -K^\ast\}$. 
Let $X$ be a Seifert matrix obtained from a twin Seifert surface 
for $K\sqcup K^{@}$. We call $X$ a \emph{twin Seifert matrix}.  
The $d$-th Alexander polynomial for $d\in\Z$ associated with $X$ 
is called the {\it $d$-th twin Alexander polynomial} 
$A_{K, d}^@$ of $K$. In particular, the \emph{twin Alexander polynomial} is $A^@_K := A^@_{K, 1}$.
The {\it twin signature} $\sigma^@$ of $K$ is the signature of $X + X^T$.  
There are two kinds of Alexander polynomials and signatures   
since $K^@$ is either $-K$ or $-K^\ast$. 
\end{definition}

\begin{remark}
	Note that the twin Alexander polynomial is defined as the \emph{first} Alexander polynomial of X, not the zeroth. So it is the greatest common divisor of the $(n-1) \times (n-1)$ minors of $tX - X^T$ in $\Z[1/2][t]$ if $X$ is an $n \times n$ matrix; see Crowell--Fox~\cite{CF}.
	The zeroth Alexander polynomial is also defined, and is an isotopy invariant.
\end{remark}

By Proposition~\ref{thmsea}, 
we have the following. 

\begin{theorem}\label{thmsn}
 The balance class of $d$-th twin Alexander polynomial and the twin signature 
 are topological invariants of knots in thickened surfaces. 
\end{theorem}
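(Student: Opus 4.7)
The plan is to deduce Theorem~\ref{thmsn} as an essentially formal consequence of Proposition~\ref{thmsea}, combined with the standard fact that both the $d$-th Alexander polynomial (as a balance class) and the signature of the symmetrized Seifert form depend only on the S-equivalence class of the Seifert matrix. Proposition~\ref{thmsea} has already done the geometric work: it asserts that the S-equivalence class of a twin Seifert matrix $X$ associated with $K \sqcup K^@$ depends only on the diffeomorphism class of $K$ in $\mathcal F$. Since ambient isotopies of $K$ are in particular orientation-preserving self-diffeomorphisms of $\mathcal F$, this S-equivalence class is automatically an isotopy invariant, and therefore a topological invariant of the knot $K$ in the thickened surface.

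I would then invoke two standard algebraic facts. First, the balance class of the $d$-th determinantal ideal of $tX - X^T$ is unchanged under the Levine stabilization moves (enlarging $X$ by a $2 \times 2$ block with a single off-diagonal nonzero entry, corresponding to attaching a 1-handle to the Seifert surface) and under congruence $X \mapsto P X P^T$ by invertible integer matrices $P$. This is exactly Proposition~\ref{prV} of the excerpt, and the proof goes through verbatim from the classical arguments of Levine, Murasugi, and Trotter. Second, the signature of $X + X^T$ is also an S-equivalence invariant: the block added by a Levine stabilization symmetrizes to a hyperbolic plane, contributing zero to the signature, while congruence by an invertible integer matrix preserves the signature by Sylvester's law of inertia.

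Combining these two facts with Proposition~\ref{thmsea} yields the theorem. The only subtle point worth checking is that everything remains well-defined even though twin Seifert matrices may have half-integer entries. For the Alexander polynomial this is already accounted for, since balance equivalence is defined over $\Q[t]$ in the excerpt. For the signature, one observes that $X + X^T$ nevertheless has integer entries: the off-diagonal entries $X_{ij} + X_{ji}$ are algebraic intersection numbers of 1-cycles in $F \times I$, while the diagonal entries equal $2X_{ii}$. So the signature is computed in the standard way over $\Q$ and is unambiguous. To the extent there is an obstacle, it is simply the bookkeeping needed to confirm that the classical S-equivalence arguments genuinely survive the passage from $\Z$-valued to $\tfrac{1}{2}\Z$-valued matrices; but since those arguments use only linear-algebraic manipulations that are insensitive to whether the coefficient ring is $\Z$ or $\tfrac{1}{2}\Z$, the verification is routine.
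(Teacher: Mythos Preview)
Your proof is correct and follows exactly the paper's approach: the paper simply says ``By Proposition~\ref{thmsea}, we have the following,'' leaving the S-equivalence invariance of the $d$-th Alexander polynomial and the signature as understood, while you spell out those standard facts. One small slip in your aside on integrality: the intersection form on $H_1(W)$ is $X - X^T$, not $X + X^T$ (compare the proof of Proposition~\ref{pro1}), so your claim that the off-diagonal entries $X_{ij} + X_{ji}$ are algebraic intersection numbers is not correct---but this is immaterial, since the signature of a real symmetric matrix is perfectly well-defined over $\Q$ regardless of whether its entries happen to be integers.
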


\begin{remark}\label{remyy}
The twin signature $\sigma^{-}(K)$ is a nontrivial invariant of $K$.  
For example, if $K$ is a classical knot in $S^2 \times [-1,1]$, then  $\sigma^{-}(K) = 2\sigma(K)$.  
See Example~\ref{ex:signature} for a non-classical example. 
\end{remark}

\begin{proposition}\label{pro1} 
Let $K$ be a knot in a thickened surface. Then there is a polynomial $f(t) \in \Z[1/2][t]$ that represents the twin Alexander polynomial of $K$ such that $f(1)= \pm 2^k$ for some $k \in \Z$. 
\end{proposition}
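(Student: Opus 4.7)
The plan is to show that $A_{K,1}^@(t)$ is not divisible by $(t-1)$; since the balance class allows multiplication by any nonzero rational, one can then rescale a representative so that its value at $t=1$ is exactly $1$.

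Pick a twin Seifert surface $W$ for $K \sqcup K^@$, whose existence is guaranteed by Proposition~\ref{thmmoon}, and write $g := g(W)$. Since $W$ is connected and has two boundary components, $H_1(W; \Z)$ is free of rank $n := 2g + 1$. Fix a basis $\alpha_1, \ldots, \alpha_n$ and form the Seifert matrix $X = (x_{ij})$ with $x_{ij} = \text{lk}(\alpha_i^+, \alpha_j)$, whose entries may be half-integers as in Section~\ref{subseclk}. By definition, $A_{K,1}^@(t)$ is a generator of the smallest principal ideal of $\Q[t]$ containing the $(n-1) \times (n-1)$ minors of $tX - X^T$.

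The crux is the observation that $X - X^T$ represents the intersection pairing on $H_1(W)$. This follows from the local identity $x_{ij} - x_{ji} = \text{lk}(\alpha_i^+, \alpha_j) - \text{lk}(\alpha_i^-, \alpha_j) = \alpha_i \cdot_W \alpha_j$: pushing $\alpha_i$ from one side of $W$ to the other introduces exactly the signed intersections with $\alpha_j$, and any half-integer ambiguity cancels in the difference. For a connected oriented surface of genus $g$ with two boundary circles, this pairing has rank exactly $2g = n - 1$, its radical being the line spanned by a boundary class. Hence some $(n-1) \times (n-1)$ minor $M(t)$ of $tX - X^T$ satisfies $M(1) \neq 0$, so the divisor $A_{K,1}^@(t)$ of $M(t)$ is not divisible by $(t-1)$ either. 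This gives $A_{K,1}^@(1) \neq 0$, and scaling a representative by $\pm 1/A_{K,1}^@(1)$ produces the required $f$.

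The only step that requires care—rather than being a genuine obstacle—is the verification that the antisymmetrization of the Seifert matrix still equals the intersection form on $H_1(W)$ in the thickened-surface setting with half-integer linking numbers; the argument used for classical links in $S^3$ carries over locally, since only the difference of two linking numbers appears.
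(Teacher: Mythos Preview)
Your proof is correct and follows the same approach as the paper's. The paper's argument is a single sentence observing that substituting $t=1$ in $tX-X^T$ yields the intersection matrix on $H_1(W)$; you have simply unpacked the implicit steps---that this form has corank one on a genus-$g$ surface with two boundary circles, so some $(n-1)\times(n-1)$ minor is nonzero at $t=1$, whence the first elementary divisor is not divisible by $t-1$ and can be rescaled. Your remark that the half-integer contributions cancel in $X-X^T$ is a helpful clarification the paper omits.
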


\begin{proof}
Let $W$ be a twin Seifert surafce for $K\sqcup K^{@}$. 
Choose a basis $\mathcal{B} = \{b_1, \dots, b_n\}$ of $H_1(W)$ such that $b_1, \dots, b_{n-1}$ is symplectic with respect to the intersection form of $W$ and $b_n = [K]$. Let $X$ be the corresponding Seifert matrix. We claim that  $X-X^T$ is the matrix $M$ of the intersection form on $H_1(W)$ in the basis $\mathcal{B}$. Indeed, let $\beta_i$ be an oriented simple closed curves on $W$ whose homology class is $b_i$ for $i \in \{1, \dots, n\}$. For $i$, $j \in \{1, \dots, n\}$, the $(i,j)$-th entry of $X - X^T$ is 
\[
\lk(\beta_i, \beta_j^+) - \lk(\beta_j, \beta_i^+) = \lk(\beta_i, \beta_j^+ \sqcup -\beta_j^-), 
\]
where $\beta_j^-$ is the negative push-off of $\beta_j$. 

Let $B$ be an annulus in $F \times I$ with boundary $\beta_j^+ \sqcup -\beta_j^-$ that intersects $W$ transversely in $\beta_j$. We choose $B$ to be very thin. Let $\pi \colon F \times I \to I$ be the projection. By perturbing $\beta_i$ and $B$, we can arrange that every component of $\pi(\beta_i) \cap \pi(B)$ is an arc $a$ connecting $\pi(\beta_j^+)$ and $\pi(\beta_j^-)$. If $a$ contains the projection of a point $p$ of $\beta_i \cap \beta_j$, then a local computation shows that the intersection sign at $p$ agrees with the average of the crossings signs at $\partial a$ (the crossings of $\pi(\beta_i)$ with $\pi(\beta_j^+)$ and $-\pi(\beta_j^-)$ along $a$ consist of  an undercrossing and an overcrossing in some order, but $\pi(\beta_j^+)$ and $-\pi(\beta_j^-)$ point in opposite directions, so the crossing signs agree). If $a$ does not contain the projection of a point of $\beta_i \cap \beta_j$, then both crossings at $\partial a$ are either undercrossings or they are both overcrossings, and as $\pi(\beta_j^+)$ and $-\pi(\beta_j^-)$ point in opposite directions, the crossing signs cancel. We conclude that 
\[
\lk(\beta_i, \beta_j^+ \sqcup \beta_j^-) = \beta_i \cdot \beta_j,
\]
as claimed.

The twin Alexander polynomial $f$ of $K$ is the greatest common divisor of the $(n-1) \times (n-1)$ minors of $tX - X^T$. As $X - X^T = M$, every such minor evaluates at $1$ to the corresponding minor of $M$. As $b_i \cdot b_n = 0$ for any $i \in \{1, \dots, n-1\}$, each minor of $M$ containing $b_n$ is zero. Furthermore, the minor corresponding to $\{b_1, \dots, b_{n-1}\}$ is $1$. Hence $f(1)$ is a unit in $\Z[1/2]$, so $\pm 2^k$ for some $k \in \Z$.
\end{proof}

By Proposition~\ref{pro1}, we can normalize the twin Alexander polynomial such that $A_K^@(1) = 1$, which gets rid of the $\pm 2^k$ ambiguity of balanced equivalence for $k \in \Z$. This still leaves an ambiguity of multiplication by $t^n$ for $n \in \Z$.

\begin{remark}\label{remQ} 
Let $\Delta_{L, d}$ be the $d$-th Alexander polynomial of a link $L$ in $S^3$. If $K$ is a knot in $S^3$, let $K^@ \in \{-K, -K^\ast\}$. 
Then $\Delta_{K \sqcup K^{@}, 1} = \Delta_{K, 0}^2$. 
Indeed, let $V$ be a Seifert surface for $K$, and let $V^@$ be the corresponding Seifert surface for $K^@$. Let $W$ be the connected sum of the components of $V \sqcup V^@$, which is a Seifert surface for $K \sqcup K^@$. Choose a basis $\mathcal{B}$ of $H_1(V)$, and let $\mathcal{B}^@$ be the corresponding basis of $H_1(V^@)$. We write $X$ and $X^@$ for the Seifert matrices of $V$ and $V^@$ using the bases $\mathcal{B}$ and $\mathcal{B}^@$, respectively. Let $W$ be the connected sum of $V$ and $V^@$, and write $\mu$ for the meridian of the connected sum tube. Then $\mathcal{B}_W := \mathcal{B} \cup \mathcal{B}^@ \cup \{[\mu]\}$ is a basis of $H_1(W)$. As both $\mu$ and $\mu^+$ are unlinked from any element of $\mathcal{B} \cup \mathcal{B}^@$, the row and column of $[\mu]$ vanish in the Seifert matrix $X_W$ of $W$ with respect to the basis $\mathcal{B}_W$. Hence, $X_W = X \oplus X^@ \oplus 0$. It follows that the only non-zero minor of $tX_W - X_W^T$ is 
\[
\det\left(t(X \oplus X^@) - (X \oplus X^@)^T \right) = \Delta_{K,0} \Delta_{K^@,0}. 
\]
As the classical Alexander polynomial is invariant under mirroring and orientation reversal, we obtain that $\Delta_{K \sqcup K^{@}, 1} = \Delta_{K, 0}^2$. Since we have $\Delta_{K,0}(1) = 1$, the polynomial
$\Delta_{K \sqcup K^{@}, 1}$ recovers $\Delta_{K, 0}$. 

Furthermore, if $K$ is a classical knot, then $\Delta_{K \sqcup K^{@}, 0} = 0$ since $K \sqcup K^@$
is a split link. So $K$ is non-classical whenever $A^@_{K, 0} \neq 0$.
\end{remark}

Fact~\ref{facII} from the introduction is now an immediate corollary of Remark~\ref{remQ}.

\section{The twin Alexander polynomial for virtual knots}\label{secVAlex} 

In this section, we prove that the $d$-th twin Alexander polynomial and the twin signature are also virtual knot invariants. 

\begin{theorem}\label{thmVAle} 
 The balance class of the $d$-th twin Alexander polynomial for any $d \in \N$ and the twin signature are invariant under virtual one-handle
 stabilization and destabilization.
\end{theorem}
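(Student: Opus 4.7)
The plan is to exhibit a single twin Seifert surface $W$ that serves both the original and stabilized ambient manifolds and produces the same Seifert matrix in each. Invariance of the twin Alexander polynomials and the twin signature will then follow immediately from Theorem~\ref{thmsn} and part~\eqref{it:S-equivalence} of Claim~\ref{cla2ch}; destabilization is handled by running the argument backwards.

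Concretely, write $F'$ for a one-handle stabilization of $F$, obtained by removing the interiors of two disjoint disks $D_1, D_2 \subset F \setminus K$ and gluing in an annulus $h$. Both doubled manifolds $F \times I$ and $F' \times I$ then contain a common three-dimensional submanifold
\[
M := (F \setminus (\mathring{D}_1 \cup \mathring{D}_2)) \times I,
\]
and differ only in whether the two $D_i \times I$ or the single $h \times I$ is glued along $\partial M$.

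I would then construct $W$ inside $M$ using Method~1 of Section~\ref{subsec2ways}. That construction places the twin Seifert surface inside $N \times I$, where $N$ is an immersed tubular neighborhood of the projection $P$ of $K$ to $F$; since the stabilization is in the complement of $K$, a general position adjustment lets me take $D_1, D_2$ disjoint from $N$, so $W \subset M$. Choosing a base point $p \in F \setminus (N \cup D_1 \cup D_2)$, the arc $\{p\} \times I$ also lies in $M$ and is disjoint from $W$, so the algebraic intersection number of $W$ with $\{p\} \times I$ is $0$ whether we view $W$ in $F \times I$ or in $F' \times I$. By Lemma~\ref{lem:well-defined} this vanishing is independent of the chosen base point, hence $W$ is a genuine twin Seifert surface for $K \sqcup K^{@}$ in both ambient manifolds.

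Finally, I would argue that the Seifert matrices coincide on the nose, not merely up to S-equivalence. The entries are linking numbers of cycles on $W$ with their positive normal push-offs, and by Definition~\ref{defbe} these are computed by projecting to the base surface and summing signed crossings. Since $W$ together with its normal push-off lies in $M$, these projections factor through the common part $F \setminus (\mathring{D}_1 \cup \mathring{D}_2) = F' \setminus \mathring{h}$, so the crossing diagrams and sign sums are identical in both ambients. The main subtlety I foresee is checking that the positive normal direction to $W$ used in the push-off is intrinsic to $W \subset M$ and is the same when $M$ is inserted into $F \times I$ or into $F' \times I$; this follows from naturality of the normal bundle, but is worth flagging because the linking numbers may be half-integers (Section~\ref{subseclk}) and ambient-independence for such pairings is not entirely automatic.
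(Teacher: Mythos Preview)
Your proposal is correct and follows essentially the same approach as the paper: construct a twin Seifert surface via Method~1 of Section~\ref{subsec2ways} so that it lies in $N \times I$, and observe that the stabilization takes place in the complement of this surface, leaving both the surface and its Seifert matrix unchanged. Your version is more careful than the paper's, in that you explicitly verify that $W$ remains type zero in the stabilized ambient (via Lemma~\ref{lem:well-defined}) and that the linking numbers computed by projection are unaffected; the paper simply asserts ``same twin Seifert surface with the same Seifert matrix'' without spelling out these checks.
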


\begin{proof}
Let $K$ be a knot in a thickened surface $F \times I$.
By Section~\ref{subsec2ways}, there is a Seifert surface $W$ for $K \sqcup K^@$ 
that lies inside $N \times I$  for $N$ an immersed tubular neighbourhood 
of the projection of $K$ to $F$. 
Hence, we can do a stabilization or destabilization of $F$ in the complement of $W$.	 
So $K$ has the same twin Seifert surface with the same Seifert matrix 
before and after the stabilization or destabilization, 
and the $d$-th twin Alexander polynomial and the twin signature remain unchanged.
\end{proof}

Hence, the following is well-defined:

\begin{definition}\label{defVAle} 
Let $\mathcal K$ be a virtual knot and $@ \in \{-, -\ast\}$. 
Let $K$ be a knot in a thickened surface that represents $\mathcal K$. 
The \emph{$d$-th twin Alexander polynomial of $\mathcal K$} is
$A_{\mathcal K, d}^@ := A_{K, d}^@$.
The \emph{twin signature of $\mathcal K$} is
$\sigma^@(\mathcal K) := \sigma^@(K)$.    
\end{definition}

We have the following by Fact~\ref{facII} and  Remark~\ref{remQ}. 

\begin{fact}\label{facI}
	Suppose that  the virtual knot $\mathcal K$ can be represented by a classical knot and $@ \in \{-, -\ast\}$. 
	Then the twin Alexander polynomial $A^@_{\mathcal K}$ has the following properties:
	\begin{enumerate}
		\item All coefficients of $A^@_{\mathcal K}$ are integers. 
		\item\label{it:roots} Each non-zero root of $A^@_{\mathcal K}$ has even multiplicity.  
		\item\label{it:chiral}  $A^-_{\mathcal K} = A^{-\ast}_{\mathcal K}$.
	\end{enumerate}
\end{fact}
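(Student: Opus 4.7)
The plan is to reduce directly to Fact~\ref{facII} via the virtual-knot invariance established in Theorem~\ref{thmVAle}. The hypothesis that $\mathcal K$ can be represented by a classical knot means that among the representatives of $\mathcal K$ there is a knot $K \subset F \times [-1,1]$ that lies inside a $3$-ball. (For example, take $F = S^2$ and use the bijection recalled in Section~\ref{secVlink} between classical links in $S^3$ and links in the thickened sphere, or equivalently push a classical representative into a ball inside any thickened surface.) By Definition~\ref{defVAle}, the twin Alexander polynomial of $\mathcal K$ equals $A^@_K$ for this specific representative, and Theorem~\ref{thmVAle} guarantees that this value does not depend on the choice.

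Once this reduction is in place, Fact~\ref{facII} supplies everything. Parts~(1) and~\eqref{it:roots} are exactly the first two items of Fact~\ref{facII} applied to $K$. Part~\eqref{it:chiral} is the identity $A^{-}_K = A^{-\ast}_K$, which is part of the chain $A^{-}_K = A^{-\ast}_K = \Delta_{K \sqcup -K} = \Delta_{K \sqcup -K^\ast} = \Delta_K^2$ in Fact~\ref{facII}, itself a consequence of Remark~\ref{remQ}.

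So the proof will consist of one sentence invoking Definition~\ref{defVAle} and Theorem~\ref{thmVAle} to pass from $\mathcal K$ to a classical representative $K$ sitting in a $3$-ball of a thickened surface, followed by a direct quotation of the three conclusions of Fact~\ref{facII}. There is no genuine obstacle: the only thing worth being explicit about is why a virtual knot represented by a classical knot admits a representative lying in a $3$-ball of \emph{some} $F \times [-1,1]$, which is precisely the content of the natural bijection between classical links in $S^3$ and links in $S^2 \times [-1,1]$ noted in Section~\ref{secVlink} (one may also one-handle stabilize $S^2$ to realize such a representative in any surface, and invariance under stabilization then justifies the choice).
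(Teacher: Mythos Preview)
Your proposal is correct and follows essentially the same route as the paper: the paper simply states that Fact~\ref{facI} follows from Fact~\ref{facII} and Remark~\ref{remQ}, and you are spelling out the implicit reduction via Definition~\ref{defVAle} and Theorem~\ref{thmVAle} to a classical representative lying in a $3$-ball.
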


\section{The twin Alexander polynomial is mirror sensitive and 
can obstruct a knot from being classical}
\label{secnoin}

In this section, we prove Theorem~\ref{mthhayaku} and our claims in Remark~\ref{remiroiro} from the introduction. 
We have the following result, which is an immediate consequence
of the definition of Seifert matrices.

\begin{proposition}\label{prokihon}
Let $X$ be a Seifert matrix associated with 
a Seifert surface $V$ for 
a null-homologous link $L$ in a thickened surface.
Then the following hold:
\begin{enumerate}
\item $X^T$ is a Seifert matrix associated with 
the Seifert surface 
$-V$ for the link $-L$. 
\item $-X^T$ is a Seifert matrix associated with 
the Seifert surface 
$V^\ast$ for the link $L^\ast$. 
\item $-X$ is a Seifert matrix associated with 
the Seifert surface 
$-V^\ast$ for the link $-L^\ast$. 
\end{enumerate}
\end{proposition}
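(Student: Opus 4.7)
The plan is to compute each new Seifert matrix directly from the definition in terms of linking numbers and identify it with the indicated transformation of $X$. Let $\{\alpha_1,\dots,\alpha_r\}$ be a basis of $H_1(V)$, so that $X_{ij}=\text{lk}(\alpha_i,\alpha_j^+)$, where $\alpha_j^+$ denotes the pushoff of $\alpha_j$ off $V$ in the positive normal direction. The same collection $\{\alpha_i\}$, viewed as 1-cycles in the ambient manifold, also serves as a basis of $H_1(-V)$, $H_1(V^\ast)$, and $H_1(-V^\ast)$.

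Before the case analysis I would record two properties of the (possibly half-integer) linking numbers from Section~\ref{subseclk} that carry over verbatim from the classical setting: symmetry $\text{lk}(\alpha,\beta)=\text{lk}(\beta,\alpha)$ for disjoint null-homologous 1-cycles in $\mathcal F$, and the pushoff identity $\text{lk}(\alpha_i,\alpha_j^-)=\text{lk}(\alpha_j,\alpha_i^+)=X_{ji}$, obtained by sliding the pushoff from one factor to the other and applying symmetry.

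With these in hand, part~(1) is immediate: reversing the orientation of $V$ reverses its co-orientation in $\mathcal F$, so $\alpha_j^{(-V),+}=\alpha_j^{V,-}$ up to isotopy in the complement of $-V$, and hence
\[
X^{(-V,-L)}_{ij}=\text{lk}(\alpha_i,\alpha_j^-)=X_{ji}=(X^t)_{ij}.
\]
For part~(2), let $r\colon F\times[-1,1]\to F\times[-1,1]$ denote reflection across $F\times\{0\}$; it is orientation-reversing, so linking numbers transform as $\text{lk}(r(\alpha),r(\beta))=-\text{lk}(\alpha,\beta)$. The mirror also flips the positive normal direction, so $(\alpha_j^\ast)^+=r(\alpha_j^-)$ as a cycle in $\mathcal F$. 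Therefore
\[
X^{(V^\ast,L^\ast)}_{ij}=\text{lk}\bigl(\alpha_i^\ast,(\alpha_j^\ast)^+\bigr)=-\text{lk}(\alpha_i,\alpha_j^-)=-X_{ji}=-(X^t)_{ij}.
\]
Part~(3) then follows by combining (1) and (2): applying (1) to the pair $(L^\ast,V^\ast)$ with Seifert matrix $-X^t$ produces $-(X^t)^t=-X$ as a Seifert matrix for $(-L^\ast,-V^\ast)$.

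The main obstacle I anticipate is purely bookkeeping: keeping the conventions for $V^\ast$ versus $r(V)$ and for the positive normal under orientation reversal fully consistent, and checking that the two linking identities recorded above really do generalize to the thickened surface setting where values may be half-integers. Once those identities are in place, the remainder is symbol manipulation with no deep content beyond the classical proof.
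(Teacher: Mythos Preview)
Your proposal is correct and is essentially what the paper has in mind: the paper gives no proof at all, merely remarking that the proposition ``is an immediate consequence of the definition of Seifert matrices.'' Your computation unpacks exactly that immediate consequence in the standard way, so there is nothing to compare.

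One small comment: you state the symmetry $\text{lk}(\alpha,\beta)=\text{lk}(\beta,\alpha)$ only for null-homologous 1-cycles, but the curves $\alpha_i$ on $V$ need not be null-homologous in $\mathcal F$. This is harmless, since the linking number of Definition~\ref{defbe} is defined for arbitrary disjoint cycles and is manifestly symmetric (the sum of crossing signs does not depend on the order of $J$ and $K$), so you can simply drop that qualifier. The pushoff identity $\text{lk}(\alpha_i,\alpha_j^-)=\text{lk}(\alpha_j,\alpha_i^+)$ then follows as you say, and the rest of your bookkeeping is fine.
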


\begin{figure}
\begingroup%
  \makeatletter%
  \providecommand\color[2][]{%
    \errmessage{(Inkscape) Color is used for the text in Inkscape, but the package 'color.sty' is not loaded}%
    \renewcommand\color[2][]{}%
  }%
  \providecommand\transparent[1]{%
    \errmessage{(Inkscape) Transparency is used (non-zero) for the text in Inkscape, but the package 'transparent.sty' is not loaded}%
    \renewcommand\transparent[1]{}%
  }%
  \providecommand\rotatebox[2]{#2}%
  \newcommand*\fsize{\dimexpr\f@size pt\relax}%
  \newcommand*\lineheight[1]{\fontsize{\fsize}{#1\fsize}\selectfont}%
  \ifx\svgwidth\undefined%
    \setlength{\unitlength}{231.4962863bp}%
    \ifx\svgscale\undefined%
      \relax%
    \else%
      \setlength{\unitlength}{\unitlength * \real{\svgscale}}%
    \fi%
  \else%
    \setlength{\unitlength}{\svgwidth}%
  \fi%
  \global\let\svgwidth\undefined%
  \global\let\svgscale\undefined%
  \makeatother%
  \begin{picture}(1,1.04802612)%
    \lineheight{1}%
    \setlength\tabcolsep{0pt}%
    \put(0,0){\includegraphics[width=\unitlength,page=1]{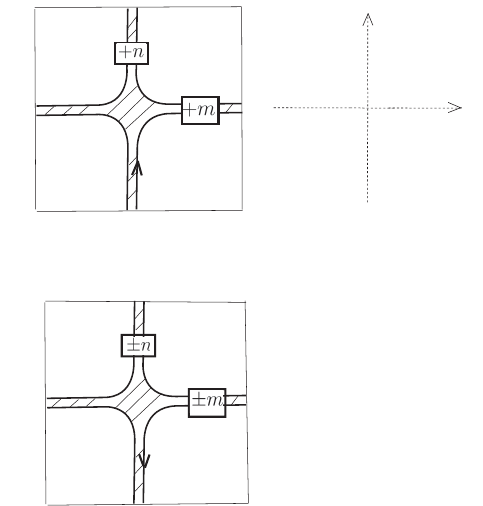}}%
    \put(-0.00091119,0.80640187){\makebox(0,0)[lt]{\lineheight{1.25}\smash{\begin{tabular}[t]{l}$\mathcal F$\end{tabular}}}}%
    \put(0.97183745,0.8150699){\makebox(0,0)[lt]{\lineheight{1.25}\smash{\begin{tabular}[t]{l}$x$\end{tabular}}}}%
    \put(0.75417281,1.0356238){\makebox(0,0)[lt]{\lineheight{1.25}\smash{\begin{tabular}[t]{l}$y$\end{tabular}}}}%
    \put(0,0){\includegraphics[width=\unitlength,page=2]{mn.pdf}}%
    \put(0.97183745,0.21017638){\makebox(0,0)[lt]{\lineheight{1.25}\smash{\begin{tabular}[t]{l}$x'$\end{tabular}}}}%
    \put(0.75417281,0.4368527){\makebox(0,0)[lt]{\lineheight{1.25}\smash{\begin{tabular}[t]{l}$y'$\end{tabular}}}}%
    \put(0.03679332,0.19831069){\makebox(0,0)[lt]{\lineheight{1.25}\smash{\begin{tabular}[t]{l}$\widehat{\mathcal{F}}$\end{tabular}}}}%
  \end{picture}%
\endgroup%

\caption{{The knots $K_{m,n}$ and $K^@_{m,n}$ with 
Seifert surfaces $V$ and $V^{@}$ and 
one-cycles $x$, $y$, $x'$, and $y'$ in the thickened torus.
The numbers in the boxes represent full twists.
}\label{figmn}}   
\end{figure}

\begin{example}\label{ex:non-classical}
Let $F = T^2$ and let $K_{m,n}$ be the knot in the thickened torus $\mathcal F := F\x [-1,1]$ shown in Figure~\ref{figmn}, where we represent the torus as a square $[-1,1] \times [-1,1]$ with the usual side identifications. See Figure~\ref{fignoin} for another reperesentation of the knot $K_{1,1}$. Note that $K_{m,n}$ is a classical knot when $mn = 0$. Indeed, if $m = 0$, one can destabilize to $S^2$ along the horizontal curve $[-1,1] \times \{0\}$, while we can destabilize to $S^2$ along the vertical curve $\{0\} \times [-1,1]$ when $n = 0$.

As before, consider $ F \x I = \mathcal F \cup \widehat{\mathcal F}$. 
Let $V\subset \mathcal F$ be the  Seifert surface for $K_{m,n}$
in Figure~\ref{figmn}. 
A basis of $H_1(V)$ consists of $x$ and $y$. 

Take $K^@_{m,n} \in \{-K_{m,n}, -{K^\ast_{m,n}} \}$ in $\widehat{\mathcal F}$ 
and the corresponding Seifert surface $V^{@}\subset\widehat{\mathcal F}$ drawn in the bottom of Figure~\ref{figmn}. 
A basis of $H_1(V^@)$ consists of $x'$ and $y'$. 

Construct a twin Seifert surface $V \# V^{@}$ from 
$V$ and  $V^{@}$ by a surgery using a vertical three-dimensional one-handle embedded in $F\x I$. 
We describe a Seifert matrix associated with $V\# V^@$. 
It is a $5 \x 5$ matrix $(\text{lk}(a_i,a_j))$,
where $a_1$ is a belt circle of the one-handle, 
$a_2=x$, $a_3=y$, $a_4=x'$, and $a_5=y'$. 

When $K^@_{m,n} = -K_{m,n}$, the Seifert matrix $X$ is  
\[
\begin{pmatrix}
0&0&0&0&0\\
0&m &\frac{1}{2}&0&\frac{1}{2} \\
0&-\frac{1}{2}& n&-\frac{1}{2}&0 \\
0&0&-\frac{1}{2}&m &-\frac{1}{2}\\
0&\frac{1}{2} &0&\frac{1}{2}&n\\
\end{pmatrix},
\] 
and we get the twin Alexander polynomial
\[
A^-_{K_{m,n}}(t) \doteq (mn(t-1)^2+t)^2.
\]
Note that we are taking the \emph{first} Alexander polynomial, 
which is the greatest common divisor of the $4 \times 4$ minors of $t  X - X^T$ in $\Z[1/2][t]$.

When $K^@_{m,n} = -K_{m,n}^\ast$, the Seifert matrix is
\[
\begin{pmatrix}
0&0&0&0&0\\
0&m &\frac{1}{2}&0&\frac{1}{2} \\
0&-\frac{1}{2}& n&-\frac{1}{2}&0 \\
0&0&-\frac{1}{2}&-m &-\frac{1}{2}\\
0&\frac{1}{2} &0&\frac{1}{2}&-n\\
\end{pmatrix}.
\] 
Hence, the twin Alexander polynomial is 
\[
\begin{split}
A^{-\ast}_{K_{m,n}}(t) &\doteq (mnt^2-2mnt+mn+1)(mnt^2-2mnt+mn+t^2) \\
&= m^2n^2(t-1)^4+mn(t^2+1)(t-1)^2+t^2.
\end{split}
\]
The polynomial $A^{-\ast}_{K_{m,n}}$ satisfies condition~\eqref{it:roots} of  Fact~\ref{facI} if and only if $mn=0$. 
Indeed, the roots of $A^{-\ast}_{K_{m,n}}(t)$ for $mn \neq 0$ and $mn \neq -1$ are
\[
1 \pm \mathrm{i}\sqrt{1/mn} \text{ \,\,\,and\,\,\, } \frac{mn \pm \mathrm{i} \sqrt{mn}}{mn + 1},
\]
so every root has multiplicity one. When $mn = 0$, the only root is $0$, which has multiplicity two. When $mn = -1$, the roots are $0$, $1/2$, and $2$, each of multiplicity one.
Therefore $K_{m,n}$ represents a non-classical virtual  knot if and only if $mn\neq0$. 

We have $A^{-}_{K_{m,n}} \neq A^{-\ast}_{K_{m,n}}$  if and only if  $mn \neq 0$. 
By part~\eqref{it:chiral} of Fact~\ref{facI}, 
the following are equivalent:
\begin{enumerate}
\item $K_{m,n}$ represents a non-classical virtual knot,
and $K_{m,n}$ and $K_{m,n}^\ast$ are not ambient isotopic in $T^2 \times [-1,1]$,
 \item $mn \neq 0$. 
\end{enumerate}
This implies Theorem~\ref{mthhayaku}. Note that we cannot conclude that $K_{m,n} \neq K_{m,n}^\ast$ as virtual knots. Indeed, $A^{-}_{K_{m,n}} \neq A^{-\ast}_{K_{m,n}}$ implies that $K_{m,n} \sqcup -K_{m,n}$ and $K_{m,n} \sqcup -K_{m,n}^*$ are not ambient isotopic in $T^2 \times [-1,1]$. It follows that $K_{m,n}$ and $K_{m,n}^*$ are not ambient isotopic in $T^2 \times [-1,1]$, but there might exist an automorphism of $T^2 \times [-1,1]$ that takes $K_{m,n}$ to $K_{m,n}^*$ that cannot be extended as the identity to the other copy of $T^2 \times [-1,1]$ containing $K_{m,n}$.
\end{example}

\begin{example}\label{ex:Kmm}
	Let $m$ be a positive integer. The mirror image of $K := K_{m,-m}$ is $K^* = K_{-m,m}$. By Theorem~\ref{mthhayaku}, we have $A^-_K \neq A^{-*}_K$ and that $K$ and $K^*$ are not isotopic. However, the diffeomorphism of $T^2 \times [-1,1]$ that swaps, preserving orientation, the meridian and longitude of $T^2$ maps $K$ to $K^*$, so $K$ and $K^*$ are diffeomorphic and hence also virtually equivalent.
\end{example}

\begin{figure}
\includegraphics[width=70mm]{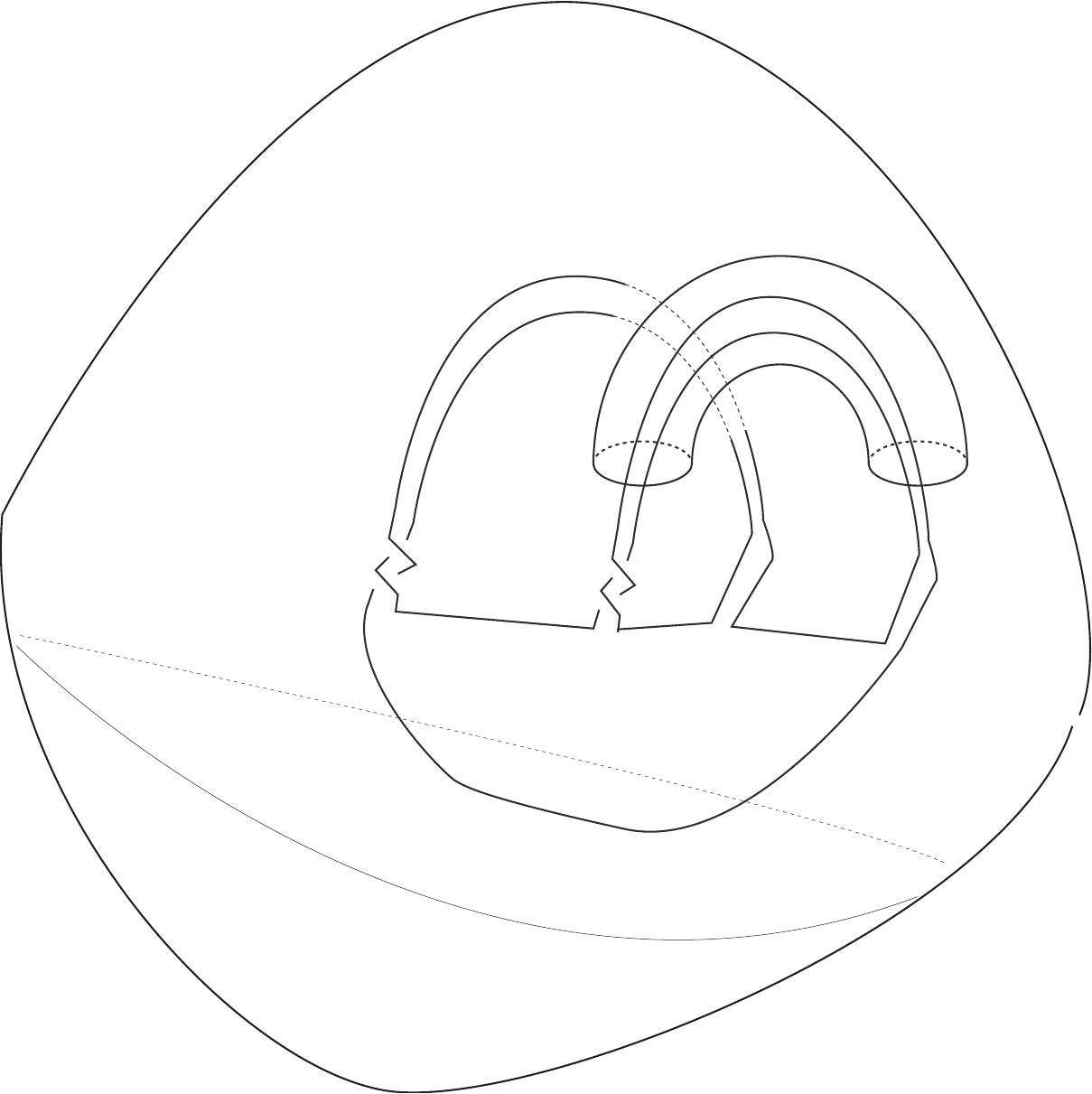}
\caption{{ The knot $K_{1,1}$ 
in the thickened torus. 
}\label{fignoin}}   
\end{figure}

\begin{example}\label{ex:signature}
The knots $K_{m,n}$ in Figure~\ref{figmn} are null-homologous in 
the thickened torus $T^2\x I$. 
However, the $d$-th twin Alexander polynomial is defined even if 
a knot is not null-homologous 
 in $T^2\x I$. 
We can use a 1-handle stabilization in virtual knot theory 
to make each knot $K_{m,n}$
into a knot in $F_2\x[-1,1]$ which is a non-vanishing cycle, 
where $F_2$ is a closed oriented surface of genus two. 

\begin{figure}
\includegraphics[width=90mm]{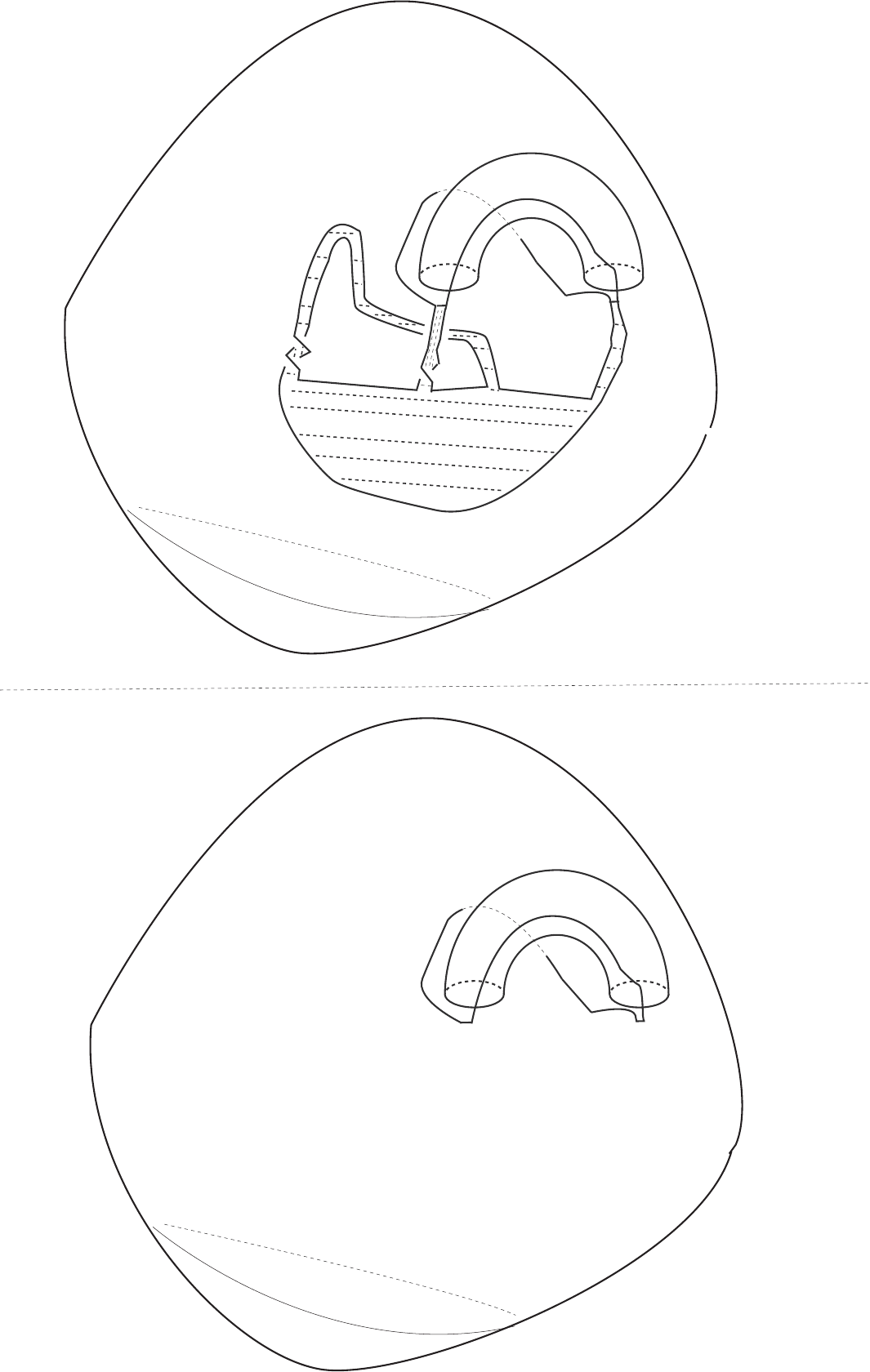}
\caption{{Top: A knot $K$  in the thickened torus and a compact oriented surface $E$ for $K$. Bottom: The knot $C$.}\label{figvt}}\label{fig:12}
\end{figure}

We now present another example in $T^2\x I$, where 
we apply the method of Section~\ref{subsec2waysII} for constructing the twin Seifert surface. 
Consider the knot $K$ in $T^2\x I$ and the compact oriented surface $E$ shown in the top of Figure~\ref{figvt}. Let $K^@ \in \{-K, -K^\ast\}$, and regard $T^2\x I$ as $\mathcal F\cup\widehat{\mathcal F}$ as above.  
Suppose that $K$  is in $\mathcal F$. 
Let $C \subset T^2 \x I$ be the knot shown in the bottom of Figure~\ref{figvt}. Assume that $C$ is in $F\x\{-1\} \subset \mathcal F$, and hence also in $\partial\widehat{\mathcal F}$. 
We construct a type zero Seifert surface $W$ for $K\sqcup K^@$
from $E$, $C \x [-1,0]$ or $C \times [0,1]$, and the corresponding surface $E^@$ for $K^@$. 

When $K^{@}=-K$, a Seifert matrix is 
\[
\begin{pmatrix}
0&0&0&0&0\\
0&1 &1&0&0 \\
0&0 & 1&0&0 \\
0&0&0&1 &0\\
0&0&0&1&1\\
\end{pmatrix},
\]
and hence the twin Alexander polynomial
\[
A^-_K \doteq (t^2-t+1)^2.
\]

When $K^{@}=-K^\ast$, a Seifert matrix is 
\[
\begin{pmatrix}
0&0&0&0&0\\
0&1 &1&0&0 \\
0&0 & 1&0&0 \\
0&0&0&-1 &-1\\
0&0&0&0&-1\\
\end{pmatrix}.  
\]
Hence, the twin Alexander polynomial 
\[
A^{-\ast}(K) \doteq (t^2-t+1)^2.
\]

The Jones polynomial of $K$ (recall the penultimate paragraph  of Section~\ref{subsecvk}) is not the Jones polynomial of any classical knot \cite{Kauffman, Kauffmani}, and
hence $K$ is non-classical.  
Using the above Seifert matrix associated with $K \sqcup -K$, 
the twin signature $\sigma^-(K) \neq 0$. Therefore, the non-classical knot $K$ is not isotopic to $K^\ast$.
\end{example}

\begin{question}\label{quemato} 
We conclude this section with a few questions. Let $K$ be a knot in a thickened surface.
\begin{enumerate}
\item Are the coefficients of the twin Alexander polynomial of $K$ always integers? 
\item Do all twin Seifert surfaces for $K \sqcup K^@$
have a Seifert matrix whose entries are integers?
\item Is $A^-_K$ always a square of a polynomial?  
\item Is $A^{-}_K$ or $A^{-\ast}_K$ mirror sensitive up to isotopy? 
\item Is the signature $\sigma^{-\ast}(K)$ always zero? 
\end{enumerate}
\end{question}

\begin{question}
	Let $K$ be a knot in a thickened surface $F\x[-1,1]$.  
	Are the twin Alexander polynomials $A^-_K$ 
	and $A^{-\ast}_K$ determined by 
	the colored Jones polynomials of $K\sqcup -K$ 
	and $K\sqcup -K^\ast$, respectively? 
	If $F=S^2$, the answer is affirmative by  
	Bar-Natan and Garoufalidis~\cite{BaGa}. Note that one can define the colored Jones polynomial for links in thickened surfaces and for virtual links. 
\end{question}

\section{The Alexander module associated with twin Seifert surfaces}\label{secmodu} 

Let $K$ be a knot in $F\x [-1,1]$ and let $K^@ \in \{-K, -K^\ast\}$.
Consider $K\sqcup K^{@}$ in $F\x I$ as above.
Let $N$ denote a tubular neighborhood of $K\sqcup K^{@}$ in $F\x I$. By construction, a twin Seifert surface for $K$ 
determines a unique homology class in
\[  
H_2(F\x [-1,1] \setminus \text{Int}(N), \partial N).
\] 
Therefore, the twin Seifert surfaces   for $K$
 induce a unique cyclic covering $\widetilde{X}$ of 
$F\x [-1,1] \setminus \text{Int}(N)$.  
Then $H_1(\widetilde{X};\Z)$ is
naturally a $\Z[t,t^{-1}]$-module, and is of the form
\[
\Z[t,t^{-1}]\oplus \dots \oplus\Z[t,t^{-1}]
\oplus(\Z[t,t^{-1}]/\lambda_1)\oplus \dots \oplus(\Z[t,t^{-1}]/\lambda_l)
\] 
for some polynomials $\lambda_1, \dots, \lambda_l$.

\begin{definition}
The polynomial 
$\lambda_1 \cdots \lambda_l$ 
is an isotopy invariant of $K$, and we call it 
the\emph{ cc twin polynomial} for $K$, where ``cc'' stand for ``cyclic covering.''
\end{definition}

If $K$ is a knot in the thickened sphere,  that is,  
a classical knot in $S^3$, 
the cc twin polynomial and the twin Alexander polynomial are equivalent. 
So it is natural to ask the following question. 

\begin{question}\label{quecov}
What is the relation between 
the cc twin polynomial and the twin Alexander polynomial? 
\end{question}

The Alexander polynomial of a classical knot $K$ in $S^3$ is 
Reidemeister torsion. Its refinement, Turaev torsion 
\cite{J, OSap, Turaev, Turaevronbun}, is the graded Euler characteristic of 
the knot Floer homology of $K$; see \cite{J, OSap}.  
So it is natural to ask the following question. 

\begin{question}\label{queFl}
Does a kind of Floer homology determine 
the twin Alexander polynomial
and  the  cc twin polynomial for knots in a thickened surface, and for virtual knots?
\end{question}

Although we do not answer Question~\ref{queFl} in this paper, 
we discuss a few relations between Floer homology 
and links in thickened surfaces and virtual links
in Section~\ref{sec:Floer}.

\section{Web Seifert surfaces and $\varepsilon(K)$ for knots in thickened surfaces}\label{sectypeII}  

In this section, we introduce another type of Seifert surface for knots in thickened surfaces, and define a numerical invariant using them.

\begin{definition}\label{def:web}
	Let $K$ be a knot in a thickened surface $F \times [-1,1]$.
	A \emph{web Seifert surface} for $K$ is a compact, connected, oriented surface $W$ with boundary, properly embedded in $F \times [-1,1]$, such that 
	\begin{enumerate}
		\item $\partial W \setminus (F \times \{-1,1\}) = K$,
		\item\label{it:ast} $W \cap (F \times \{-1,1\}) \neq \emptyset$.
	\end{enumerate}
\end{definition}

Clearly, every knot admits a web Seifert surface.
We will explain why we impose condition~\eqref{it:ast} in Remark~\ref{remdjd}. 

\begin{remark}\label{rematd}
Let $K$ be a knot in $\mathcal F = F\x[-1,1]$. 
Take $K^@ \in \{-K, -K^\ast\}$ in $\widehat{\mathcal F}$, and 
construct $F\x I$ from 
$\mathcal F$ and  $\widehat{\mathcal F}$
 as we did in Section~\ref{secwakatta}. Let $V$ be a twin Seifert surface for $K \sqcup K^@$.
Then $V \cap \mathcal F$ is a web Seifert surface for $K$. 
Not every web Seifert surface arises this way, since
$V \cap \mathcal F$ intersects only one connected component of 
$\partial \mathcal F$, while  a web Seifert surface might intersect both. 
 \end{remark}

\begin{definition}\label{defep}
Let $K$ be a knot in a thickened surface.	
We define $\varepsilon(K)$ as the maximal Euler characteristic of a web Seifert surface for $K$. 
If $\mathcal K$ is a virtual knot, we define $\varepsilon(\mathcal K)$ to be the $\varepsilon(K)$ for $K$ the minimal genus representative of $\mathcal K$. 
\end{definition}
 
\begin{remark}\label{rempp}
Let $K$ be the trivial knot in a thickened surface. 
 By property~\eqref{it:ast} of web Seifert surfaces, 
 $\varepsilon(K)=0$. 
 If we did not impose property~\eqref{it:ast}, we would have  $\varepsilon(K) = 1$. 
 This difference will be important later; see Remark~\ref{remdjd}.  

\begin{figure}
	\includegraphics[width=45mm]{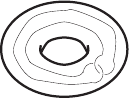}
	\caption{{A knot in a thickened torus.}\label{figchigau}}   
\end{figure}
  
 If $\varepsilon(K)=0$, then $K$ is virtually trivial.
 If $K$ represents the virtual knot $\mathcal K$, then we have $\varepsilon(K) \ge \varepsilon(\mathcal K)$. However 
 $\varepsilon(K)\neq\varepsilon(\mathcal K)$ in general; 
 for example in the case of the knot in Figure~\ref{figchigau}. 
 \end{remark}  
  
We will prove in Section~\ref{sec:Floer} that 
a type of Floer homology determines   $\varepsilon(K)$.

\section{Bracelet Seifert surfaces}\label{sectypeIII}

\subsection{Bracelet Seifert surfaces and $\xi(K)$ for knots in thickened surfaces}\label{subsecmist}

Let $K$ be a knot in a thickened surface $\mathcal F=F\x[-1,1]$. 
Take $-K^\ast$ in $\widehat{\mathcal F} = F \times [-1,1]$.   
Consider $F \x I = \mathcal F \cup \mathcal F_{-K^\ast}$ as in Section~\ref{secwakatta}. 
Construct $F\x S^1$ from $F\x I$ by identifying the two copies of $F$ in  the boundary of $F\x I$; 
see Figure~\ref{figFS2}. 
Then $K\sqcup -K^\ast$ is embedded in $F\x S^1$.  
(We do not consider $-K$ here.)

\begin{definition}
	For a knot $K$ in $F \x [-1,1]$, we define 
	a {\it bracelet Seifert surface} for $K \sqcup -K^\ast$
	to be  a compact, connected, oriented surface $S$ embedded in $F \times S^1$ with the following properties:
	\begin{enumerate}
		\item $\partial S = K\sqcup -K^\ast$. 
		\item Take a twin Seifert surface $W$ for 
		$K\sqcup -K^\ast$ in $F\x I$. 
		Then $W$ is embedded in $F \x S^1$ naturally. We require that 
		\[ 
		[S, K\sqcup -K^\ast] = [W, K\sqcup -K^\ast] \in H_2(F\x S^1, K\sqcup -K^\ast).
		\]  
	\end{enumerate}
\end{definition}

\begin{remark}\label{remte}
	Let $S$ be a bracelet Seifert surface for $K \sqcup -K^\ast$ in $F\x S^1$. By definition, $S$ is connected, hence $S \cap \mathcal{F}$ satisfies condition~\eqref{it:ast} of Definition~\ref{def:web}. So $S \cap\mathcal F$ is  a web Seifert surface  in $\mathcal F$  for $K$.
\end{remark}

\begin{definition}\label{defxi}
	Let $K$ be a knot in a thickened surface $F \times [-1,1]$.	
	Then $\xi(K)$ denotes
	the minimal genus of a bracelet Seifert surface for $K \sqcup -K^\ast$ in $F \x S^1$.
	If $\mathcal K$ is a virtual knot, we define $\xi(\mathcal K)$ to be $\xi(K)$ for $K$ the minimal genus representative of $\mathcal K$. 
\end{definition}

Recall $\varepsilon(K)$ from Definition~\ref{defep}.

\begin{theorem}\label{thmmuep}
	Let $K$ be a knot in $\mathcal F=F\x[-1,1]$. 
	Then 
	\[
	\xi(K)=-\varepsilon(K).
	\]  
\end{theorem}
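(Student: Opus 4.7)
My plan is to prove both inequalities, using the decomposition $F\x S^1 = \mathcal{F} \cup \widehat{\mathcal{F}}$ and the identity $g(S) = -\chi(S)/2$ for a bracelet Seifert surface $S$, which has two boundary circles.

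For $\xi(K) \geq -\varepsilon(K)$, I would take a bracelet Seifert surface $S$ with $g(S) = \xi(K)$ and put it in minimal position with respect to $F\x\{-1, 1\}$ by the standard innermost-disk argument: any innermost circle of $S \cap (F\x\{-1, 1\})$ bounding a disk in $S$ can be pushed off by ambient isotopy, preserving both $g(S)$ and the homology class of $S$. After this, no component of $S \cap \mathcal{F}$ other than the component $W_0$ containing $K$ is a disk, closed components are ruled out by the connectedness of $S$, and similarly on the $\widehat{\mathcal{F}}$ side with the component $\tilde{W}_0$ containing $-K^\ast$, so all extraneous components have $\chi \leq 0$. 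Connectedness of $S$ also forces $W_0$ and $\tilde{W}_0$ to meet $F\x\{-1, 1\}$, so each is a genuine web Seifert surface. Therefore
\[
\chi(S) = \chi(S \cap \mathcal{F}) + \chi(S \cap \widehat{\mathcal{F}}) \leq \chi(W_0) + \chi(\tilde{W}_0),
\]
so $\max(\chi(W_0), \chi(\tilde{W}_0)) \geq \chi(S)/2 = -\xi(K)$. Pulling $\tilde{W}_0$ back to $\mathcal{F}$ via the mirror diffeomorphism $\tau \colon (\mathcal{F}, K) \to (\widehat{\mathcal{F}}, -K^\ast)$ gives a web Seifert surface for $K$ with the same Euler characteristic, so $\varepsilon(K) \geq -\xi(K)$.

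For the reverse inequality, I would take a web $W$ with $\chi(W) = \varepsilon(K)$ and construct a bracelet Seifert surface $S$ of Euler characteristic $2\chi(W)$ as follows. The naive double $W \cup \tau(W)$ does not assemble to a smooth surface because $\tau$ interchanges the two levels $F\x\{-1\}$ and $F\x\{+1\}$, producing non-matching boundary curves on the shared surfaces. One reconciles this by adding cobordism pieces—annuli in $F\x\{-1, 1\}$ together with small tubes running through $\mathcal{F}$ and $\widehat{\mathcal{F}}$—each of Euler characteristic zero, that transform the mismatched boundaries into matching ones. After these adjustments the assembled surface $S$ satisfies $\partial S = K \sqcup -K^\ast$ and $\chi(S) = 2\chi(W)$; a further correction by a class of Euler characteristic zero in $H_2(F\x S^1)$, if needed, places $[S]$ in the twin Seifert homology class. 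Then $g(S) = -\chi(W) = -\varepsilon(K)$, giving $\xi(K) \leq -\varepsilon(K)$.

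The hardest part is the construction in the second inequality: the non-matching boundary curves of $W$ on $F\x\{-1\}$ and $F\x\{+1\}$ are swapped by $\tau$, so the mismatch represents a class determined by $[K] \in H_1(F)$, and cobording it while preserving the Euler characteristic and the twin Seifert homology class requires a careful choice of cobordism pieces. The innermost-disk simplification in the first step is technically standard but should be performed by ambient isotopy so as not to change the bracelet homology class.
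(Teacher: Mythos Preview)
Your argument for $\xi(K)\ge -\varepsilon(K)$ is sound and in fact more careful than the paper's: the paper simply asserts that $S\cap\mathcal F$ is a web Seifert surface without addressing connectedness, whereas you isolate the component $W_0$ containing $K$ and control the remaining pieces via an innermost-disk clean-up. (Be a little more precise about that step: what you actually need is that any disk component of $S\cap\mathcal F$ has boundary bounding a disk in $F\times\{\pm1\}$, which follows from incompressibility of the fibre $F$ in $F\times S^1$; then an innermost-in-$F$ argument lets you isotope it away.)

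The second half, however, rests on a misconception about the involution. In the paper's model, $\widehat{\mathcal F}=\mathcal F_{-K^\ast}$ is a \emph{second copy} of $F\times[-1,1]$ with the opposite orientation, and $F\times S^1$ is built by identifying $F\times\{-1\}\subset\mathcal F$ with $F\times\{-1\}\subset\widehat{\mathcal F}$ and $F\times\{1\}$ with $F\times\{1\}$. The map $\tau$ taking $(x,t)\in\mathcal F$ to $(x,t)\in\widehat{\mathcal F}$ therefore \emph{fixes both gluing surfaces pointwise}; it does not interchange the levels $F\times\{-1\}$ and $F\times\{1\}$. Consequently, if $V$ is a web Seifert surface for $K$ with boundary curves $C_-\subset F\times\{-1\}$ and $C_+\subset F\times\{1\}$, then $-V^\ast:=\tau(V)$ has \emph{exactly the same} boundary curves on the shared surfaces, and $V\cup(-V^\ast)$ assembles directly into a smooth surface in $F\times S^1$ with $\chi=2\chi(V)$. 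No cobordism pieces, tubes, or homology corrections are needed. This is precisely the paper's ``symmetric construction''. Your proposed fix is not only unnecessary but also not rigorously justified: the cobordism pieces are described only vaguely, and ``correcting by a class of Euler characteristic zero in $H_2(F\times S^1)$'' is problematic since the generator $[F]$ has $\chi(F)=2-2g$ and adjoining closed surfaces generally destroys connectedness.

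So the fix is simply to discard the level-swapping picture of $\tau$ and double $V$ directly; then the reverse inequality is immediate. The remaining point, which both you and the paper pass over quickly, is that $V\cup(-V^\ast)$ lies in the bracelet (i.e.\ twin) homology class; this follows from the $\mathbb Z/2$-symmetry of the construction, since the involution exchanging $\mathcal F$ and $\widehat{\mathcal F}$ fixes both the surface and the twin Seifert class.
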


\begin{proof}
	Let $S$ be a bracelet 
	Seifert surface for  $K \sqcup -K^\ast$ in $F \times S^1$. 
	Let 
	$V := S \cap \mathcal F$ and $W := S \cap \mathcal F_{-K^\ast}$. 
	Then  $V$ is a web Seifert surface for 
	$K$ and $W$ is a web Seifert surface for $-K^\ast$.  
	We have 
	\[
	-\frac{\chi(V)+\chi(W)}{2} = g(S),
	\] 
	since $S = V \cup W$ is a connected surface with two boundary components. 
	
	Without loss of generality, suppose that $\chi(V) \ge \chi(W)$.
	Put $-V^\ast$ in $\mathcal F_{-K^\ast}$.
	By the symmetric construction,  
	$V \cup -V^\ast$ is a bracelet Seifert surface 
	for $K \sqcup -K^\ast$ in $F\x S^1$. 
	As $\chi(V) = \chi(-V^\ast)$, we have
	\[
	-\frac{\chi(V) + \chi(-V^\ast)}{2} \le -\frac{\chi(V)+\chi(W)}{2}.
	\] 
	By construction, $V \cup -V^\ast$ is connected, hence
	\[
	-\frac{\chi(V) + \chi(-V^\ast)}{2} = \displaystyle g(V \cup -V^\ast).
	\]
	Therefore, $g(V \cup -V^\ast) \le g(S)$. 
	So there is a bracelet Seifert surface $S$ such that   
	$g(S) = \xi(K)$ and  $W = -V^\ast$.
	For such an $S$, we have
	\[
	g(S) = g(V \cup -V^\ast) = -\frac{\chi(V) +\chi(-V^\ast)}{2} = -\chi(V).
	\] 
	Hence $\xi(K) \ge -\varepsilon(K)$.
	
	Conversely, let $V$ be a web Seifert surface for $K$ such that $\chi(V) = \varepsilon(K)$. 
	Then $V \cup -V^\ast$ is a bracelet Seifert surface for $K \sqcup -K^\ast$ in $F \times S^1$ of genus
	\[
	-\frac{\chi(V )+ \chi(-V^\ast)}{2} = -\varepsilon(K).
	\] 
	Therefore $\xi(K) \le -\varepsilon(K)$.  
	This completes the proof of Theorem \ref{thmmuep}. 
\end{proof}

\begin{remark}\label{remdjd}
	We comment on what we announced in  Remark~\ref{remte}. 
	If we do not impose condition~\eqref{it:ast} in Definition~\ref{def:web}, the above doubling construction might not produce a bracelet Seifert surface as the result might not be connected. Furthermore, we would not have
	\[
	-\frac{\chi(V) + \chi(-V^\ast)}{2}=\displaystyle g(V \cup -V^\ast).
	\]  
\end{remark}

The quantities $\xi(K)$ and $\varepsilon(K)$ are 
related to a Floer homology; see Section~\ref{secshikashi}.

\subsection{The Alexander module associated with bracelet Seifert surfaces}\label{subsectildeo}

Let $K$ be a knot in a thickened surface $F\x[-1,1]$. 
A bracelet 
Seifert surface 
in $F\x S^1$ for $K \sqcup -K^\ast$ 
determines a unique class in 
\[
H_2 \left(F \x S^1 \setminus N(K\sqcup -K^\ast), \partial N(K\sqcup -K^\ast)\right).
\]  
Therefore, the bracelet Seifert surfaces 
induce a cyclic covering space $\widetilde{G}$ of 
$F\x S^1 \setminus N(K \sqcup -K^\ast)$.
Then $H_1(\widetilde{G};\Z)$ is naturally a $\Z[t,t^{-1}]$-module 
\[
\Z[t,t^{-1}]\oplus\cdots \oplus\Z[t,t^{-1}]
\oplus(\Z[t,t^{-1}]/\lambda_1)\oplus\cdots \oplus(\Z[t,t^{-1}]/\lambda_l).
\] 

\begin{definition}
The polynomial 
$\lambda_1\cdots \lambda_l$ 
is an isotopy invariant of $K\subset F\x[-1,1]$, and we call it 
the
{\it cc bracelet polynomial} of $K$. 
\end{definition}

From any twin Seifert surface in $F\x I$
one can obtain a bracelet Seifert surface in $F\x S^1$
by identifying the two boundary components.
It is a natural question to try to relate the cc bracelet polynomial 
to the cc twin polynomial; see Section~\ref{secmodu} and 
Remark \ref{remte}.   
Note that the former is defined by using 
a cyclic covering space of a closed manifold while 
the latter of a manifold with non-empty boundary.

\section{Floer homology} \label{sec:Floer}

\subsection{A review of Heegaard Floer homology}\label{secFloer} 

Heegaard Floer homology, defined by Ozsv\'ath and Szab\'o  \cite{OSmfd, OSap}, is a package of invariants for three-manifolds and four-manifolds. It also includes a knot invariant defined independently by Ozsv\'ath--Szab\'o~\cite{OSkf} and Rasmussen~\cite{Ras}, called knot Floer homology. It detects the unknot~\cite{OSg} and fibredness \cite{J2, Ni}. 

Sutured Floer homology, defined by the first author~\cite{J},
is a common extension of the hat version of Heegaard Floer homology of closed three-manifolds and knot Floer homology
to three-manifolds with boundary, more specifically, to sutured manifolds.  
Knot Floer homology is defined for null-homologous knots in closed oriented three-manifolds. 
On the other hand, 
sutured Floer homology is defined for arbitrary knots.

Bordered Floer homology, defined by Lipshitz, Ozsv\'ath, and Thurston~\cite{LOT},
is an invariant of 3-manifolds with parametrized boundary. 
Sutured Floer homology for different sutures and bordered Floer homology 
determine each other in a suitable sense. 

When we are given an invariant of links in thickened surfaces, 
one can ask whether it is changed by handle stabilization.
That is, is the invariant a virtual link invariant? 
In the following sections, we study 
whether we can extend Floer homology to virtual links.

\subsection{Floer homologies for knots in thickened  surfaces}\label{secSurf} 

In this section, we outline a number of constructions for defining 
Heegaard Floer homology for a knot in a thickened surface.

Let $K$ be a knot in a thickened surface $\mathcal F=F\x[-1,1]$.
Identify $F\x\{1\}$ with 
$F\x\{-1\}$. We obtain a knot $K$ in $F\x S^1$;
see Figure~\ref{figFS1}. 
 
Let $K^@ \in \{-K, -K^\ast\}$ in $\widehat{\mathcal F}$.   
Consider $F \x I = \mathcal F \cup\widehat{\mathcal F}$ as before. 
Make $F\x S^1$ from $F\x I$ 
by identifying the two copies of $F$ in  the boundary of $F\x I$.
Then we can view $K \sqcup K^@$ as a link in $F \times S^1$; 
see Figure~\ref{figFS2}. 

\begin{figure}
\begingroup%
  \makeatletter%
  \providecommand\color[2][]{%
    \errmessage{(Inkscape) Color is used for the text in Inkscape, but the package 'color.sty' is not loaded}%
    \renewcommand\color[2][]{}%
  }%
  \providecommand\transparent[1]{%
    \errmessage{(Inkscape) Transparency is used (non-zero) for the text in Inkscape, but the package 'transparent.sty' is not loaded}%
    \renewcommand\transparent[1]{}%
  }%
  \providecommand\rotatebox[2]{#2}%
  \newcommand*\fsize{\dimexpr\f@size pt\relax}%
  \newcommand*\lineheight[1]{\fontsize{\fsize}{#1\fsize}\selectfont}%
  \ifx\svgwidth\undefined%
    \setlength{\unitlength}{209.67323735bp}%
    \ifx\svgscale\undefined%
      \relax%
    \else%
      \setlength{\unitlength}{\unitlength * \real{\svgscale}}%
    \fi%
  \else%
    \setlength{\unitlength}{\svgwidth}%
  \fi%
  \global\let\svgwidth\undefined%
  \global\let\svgscale\undefined%
  \makeatother%
  \begin{picture}(1,0.76330992)%
    \lineheight{1}%
    \setlength\tabcolsep{0pt}%
    \put(0,0){\includegraphics[width=\unitlength,page=1]{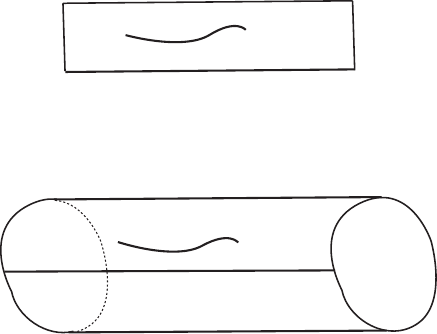}}%
    \put(0.59325024,0.67375943){\makebox(0,0)[lt]{\lineheight{1.25}\smash{\begin{tabular}[t]{l}$K$\end{tabular}}}}%
    \put(0.57560858,0.19187275){\makebox(0,0)[lt]{\lineheight{1.25}\smash{\begin{tabular}[t]{l}$K$\end{tabular}}}}%
    \put(0.84062457,0.67329255){\makebox(0,0)[lt]{\lineheight{1.25}\smash{\begin{tabular}[t]{l}$\mathcal F$\end{tabular}}}}%
    \put(0,0){\includegraphics[width=\unitlength,page=2]{FS1.pdf}}%
  \end{picture}%
\endgroup%

\caption{{ 
		The knot $K$ is in $F\x S^1$. 
}\label{figFS1}}   
\end{figure}

\begin{figure}
\begingroup%
  \makeatletter%
  \providecommand\color[2][]{%
    \errmessage{(Inkscape) Color is used for the text in Inkscape, but the package 'color.sty' is not loaded}%
    \renewcommand\color[2][]{}%
  }%
  \providecommand\transparent[1]{%
    \errmessage{(Inkscape) Transparency is used (non-zero) for the text in Inkscape, but the package 'transparent.sty' is not loaded}%
    \renewcommand\transparent[1]{}%
  }%
  \providecommand\rotatebox[2]{#2}%
  \newcommand*\fsize{\dimexpr\f@size pt\relax}%
  \newcommand*\lineheight[1]{\fontsize{\fsize}{#1\fsize}\selectfont}%
  \ifx\svgwidth\undefined%
    \setlength{\unitlength}{206.93085911bp}%
    \ifx\svgscale\undefined%
      \relax%
    \else%
      \setlength{\unitlength}{\unitlength * \real{\svgscale}}%
    \fi%
  \else%
    \setlength{\unitlength}{\svgwidth}%
  \fi%
  \global\let\svgwidth\undefined%
  \global\let\svgscale\undefined%
  \makeatother%
  \begin{picture}(1,0.90392603)%
    \lineheight{1}%
    \setlength\tabcolsep{0pt}%
    \put(0,0){\includegraphics[width=\unitlength,page=1]{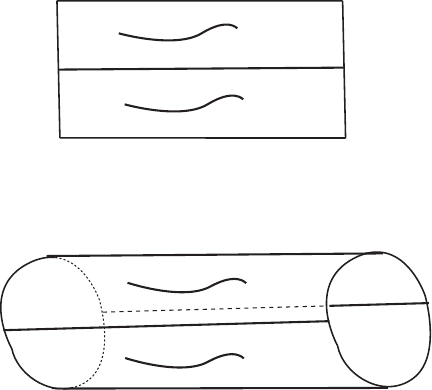}}%
    \put(0.8377709,0.8052704){\makebox(0,0)[lt]{\lineheight{1.25}\smash{\begin{tabular}[t]{l}$\mathcal F$\end{tabular}}}}%
    \put(0.83621245,0.6588022){\makebox(0,0)[lt]{\lineheight{1.25}\smash{\begin{tabular}[t]{l}$\widehat{\mathcal{F}}$\end{tabular}}}}%
    \put(0.56973605,0.82708515){\makebox(0,0)[lt]{\lineheight{1.25}\smash{\begin{tabular}[t]{l}$K$\end{tabular}}}}%
    \put(0.58220284,0.66347653){\makebox(0,0)[lt]{\lineheight{1.25}\smash{\begin{tabular}[t]{l}$K^@$\end{tabular}}}}%
    \put(0.59311117,0.23809494){\makebox(0,0)[lt]{\lineheight{1.25}\smash{\begin{tabular}[t]{l}$K$\end{tabular}}}}%
    \put(0.59155282,0.06357927){\makebox(0,0)[lt]{\lineheight{1.25}\smash{\begin{tabular}[t]{l}$K^@$\end{tabular}}}}%
    \put(0,0){\includegraphics[width=\unitlength,page=2]{FS2.pdf}}%
  \end{picture}%
\endgroup%

	\caption{{ 
			The link $K \sqcup K^{@}$ in  $F\x S^1$. 
		}\label{figFS2}}   
\end{figure}

Recall the linking number of a two-component link in a thickened surface from Section~\ref{subseclk}. 
In particular, even if $K$ is not null-homologous,  
we can define $r$-framed surgery along $K$ in $\mathcal F$ for $r \in\Q \cup \{\infty\}$.   
If we carry out the $r$-framed surgery on $\mathcal F$ along $K\subset \mathcal F$, 
then glue $F \times \{-1\}$ to $F \times \{1\}$, we obtain the three-manifold $M_{K,r}$. 

The three-manifold $M_{K\sqcup K^{@}, (r,r')}$ is defined as follows: 
Carry out $r$-framed surgery on $\mathcal F \subset F \times I$ 
along $K$, and
$r'$-framed surgery on $\widehat{\mathcal F} \subset F \times I$ along 
$K^{@}$.  
Then we glue the two copes of $F$ in the boundary of the surgered $F \times I$.

We can consider the Heegaard Floer homologies $\text{HF}^\circ(M_{K,r})$
and $\text{HF}^\circ(M_{K \sqcup K^@, (r, r')})$ for $r$, $r' \in \Q \cup \{\infty\}$
and $\circ \in \{\, \widehat{\,\,}, +, -, \infty \,\}$.

Let $L$ be a link in a closed, oriented three-maifold $Y$.
The \emph{sutured manifold $Y(L)$ complementary to $L$} is $Y \setminus N(L)$ 
with two oppositely oriented meridional sutures on each boundary component;
see \cite[Example 2.4]{J}.
Sutured Floer homology of $Y(L)$ is defined even when $L$ is 
not null-homologous; see Section~\ref{secFloer}. 
Hence, another way to define Floer homology for a knot in a thickened surface is as follows:

\begin{definition}
Let $K$ be a knot in the thickened surface $\mathcal F := F \times [-1,1]$. Then let	
\[
\begin{split}
\HFK(F \times S^1, K) &:= \SFH\left((F \times S^1)(K)\right), \\
\HFL(F \times S^1, K \sqcup K^@) &:= \SFH\left((F \times S^1)(K \sqcup K^@)\right).
\end{split}
\]
\end{definition}

An idea for using Floer homologies for links in thickened surfaces is suggested in the last section of
\cite{KauffmanOgasasq}. 
In this paper, we carry out that idea.

\subsection{Floer homology for virtual knots}\label{secideFHVnew}

We now introduce Heegaard Floer homology
for virtual knots.
We review a few basic results in virtual knot theory. 
Assume that a knot $K$ in a thickened surface $F\x [-1,1]$ represents a virtual knot $\mathcal K$.
Then we say that $(F, K)$ represents $\mathcal K$.

\begin{definition}\label{thmrepgen!}
The {\it representing genus} $m(\mathcal K)$ of the virtual knot $\mathcal K$
is 
\[
\min \{g : (F_g, K) \text{ represents } \mathcal K \},
\] 
where $F_g$ is the closed oriented surface of genus $g$.
\end{definition}

Kuperberg \cite{Kuperberg} proved the following theorem.

\begin{theorem}\label{thmKuperberg!} 
Let $\mathcal K$ be a virtual knot, and 
 $m$ the representing genus of $\mathcal K$. 
\begin{enumerate}
\item Suppose that two knots $K$ and $K'$ in $F_m\x[-1,1]$ represent $\mathcal K$. 
Then  $K$ is obtained from $K'$ by 
an orientation-preserving diffeomorphism of $F_m\x[-1,1]$, 
taking 
$F\x\{1\}$ to itself.  
\item \label{it:destab}
Assume that $(F, J)$ represents $\mathcal K$.  
Then we can obtain the minimal representing surface of $\mathcal K$ 
from $(F,J)$ only using destablizations.  
\end{enumerate}
\end{theorem}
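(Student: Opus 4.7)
My approach follows Kuperberg's original strategy: reformulate stabilization and destabilization as three-dimensional operations inside the link exterior, then run a minimal-complexity argument combined with a commutation/cancellation lemma for the moves. Throughout, I use that by Theorem~\ref{pkihon} two representatives of $\mathcal K$ are connected by a finite sequence of isotopies, orientation-preserving diffeomorphisms, stabilizations, and destabilizations.

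\textbf{Step 1: 3-dimensional reformulation.} A stabilization of $(F,K)$ is determined by a pair of disjoint disks in $F$ in the complement of the diagram of $K$; equivalently, it attaches a vertical 1-handle to $F\times[-1,1]$ in the complement of $K$. A destabilization is determined by a simple closed curve $\gamma\subset F$, disjoint from the diagram, such that the vertical annulus $\gamma\times[-1,1]$ is compressible in $(F\times[-1,1])\setminus K$; equivalently, there is an embedded 2-sphere in the complement of $K$ (after gluing the two boundary copies of $F$) representing the stabilization. In particular, non-minimality of $(F,K)$ is detected by the existence of such a vertical incompressible-or-compressible annulus in the complement.

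\textbf{Step 2: Existence of a destabilization when not minimal (part~\eqref{it:destab}).} Suppose $(F,J)$ has genus greater than $m(\mathcal K)$. I would take any sequence of moves reducing $(F,J)$ to a minimal $(F_m, K_m)$, and look at the first stabilization in the sequence. If no stabilization ever occurs, we are already done. Otherwise, track the stabilization handle forward in the sequence. The key commutation/cancellation lemma is the following: given a stabilization followed later by a destabilization, either (i)~the destabilization curve can be isotoped off the cocore of the stabilization handle, in which case the two moves commute, or (ii)~the destabilization curve hits the stabilization handle essentially, in which case a standard innermost-disk / outermost-arc argument in the link complement lets us cancel the two moves against each other. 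Pushing all stabilizations to the right and cancelling them against destabilizations yields a destabilization-only sequence from $(F,J)$ to $(F_m,K_m)$.

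\textbf{Step 3: Uniqueness up to diffeomorphism (part~(1)).} Let $K, K'\subset F_m\times[-1,1]$ both represent $\mathcal K$. Connect them by a sequence of moves. By Step~2 applied in both directions, I may assume the sequence consists of a block of stabilizations followed by a block of destabilizations, with a common surface $(F_{m+k}, K'')$ in the middle. It then suffices to prove that the composite move \emph{stabilize then destabilize} on a minimal genus representation is realized by an orientation-preserving diffeomorphism (fixing the side $F\times\{1\}$). This is where I invoke minimality: because the genus cannot drop below $m$, the destabilization curve must be isotopic in the complement of $K''$ to the cocore of the stabilization handle, and an isotopy of $F_{m+k}\times[-1,1]$ realizing this identification descends, after the handle is cancelled, to a diffeomorphism of $F_m\times[-1,1]$ carrying $K$ to $K'$.

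\textbf{Expected main obstacle.} The technical heart is the commutation/cancellation lemma in Step~2 when a destabilization curve and the cocore of an earlier stabilization handle intersect essentially in $F\setminus K$. Making these disjoint requires a Haken-type argument in the link complement, and in the virtual (non null-homologous) setting one must be careful that surgery on the intersection curves does not alter the homology class of the vertical annulus used for destabilization. This is where Kuperberg's use of irreducibility of surface pairs and a minimal-intersection argument is essential, and it is the step I would spend the most care on.
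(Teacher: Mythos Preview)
The paper does not give its own proof of Theorem~\ref{thmKuperberg!}; it simply attributes the result to Kuperberg~\cite{Kuperberg} (with Manturov's algorithmic strengthening cited separately as Theorem~\ref{thmManturov!}). So there is nothing in the paper to compare your argument against.

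That said, a brief comment on your sketch versus Kuperberg's actual argument. Kuperberg does \emph{not} proceed by a direct commutation/cancellation calculus on stabilizations and destabilizations. Instead he characterizes minimality intrinsically: $(F,K)$ is minimal genus if and only if the vertical part of the boundary of the link exterior $F\times[-1,1]\setminus N(K)$ is incompressible (equivalently, there is no essential vertical annulus bounding a disk in the complement). Part~\eqref{it:destab} then follows immediately, because a compressing disk for a vertical annulus \emph{is} a destabilization. For part~(1) he passes to a common stabilization and invokes uniqueness of compression bodies / prime decomposition for the resulting 3-manifold pair, rather than reordering moves.

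Your Step~2 dichotomy is where the gap lies. The claim that a stabilization followed by a destabilization either commutes or cancels is not true as stated: if the destabilization curve runs over the new handle once but is also homotopically nontrivial in the old surface, the two moves neither commute nor cancel---the net effect is a genuine change of the surface. Your ``innermost-disk / outermost-arc'' remark does not resolve this, because you are working with curves on $F$, not surfaces in a 3-manifold, and there is no ambient irreducibility to appeal to at that stage. This is exactly why Kuperberg recasts everything in terms of compressing disks in the 3-dimensional link exterior, where Haken-type arguments are available. If you want to salvage your combinatorial approach you would need a much more careful statement and proof of the reordering lemma; the cleaner route is to follow Kuperberg and argue via incompressibility of the vertical boundary.
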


Manturov \cite{Mvbunrui}\cite[\S2.4]{IMvbunrui}
generalized Kuperberg's theorem  
and proved the following result.

\begin{theorem}\label{thmManturov!}
In part~\eqref{it:destab} of Theorem~\ref{thmKuperberg!}, 
for any $F$, 
there is an algorithm  to construct 
the minimal representing surface of $\mathcal K$.  
\end{theorem}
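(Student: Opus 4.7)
The plan is to give an explicit algorithm which, starting from any representative $(F, J)$ of a virtual knot $\mathcal K$, produces a minimal genus representative. By Theorem~\ref{thmKuperberg!}\eqref{it:destab}, it suffices to produce a sequence of destabilizations bringing $(F, J)$ to a minimal representative, so the main task is to decide, effectively, whether a destabilization exists, and to find one when it does.

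The basic loop is as follows. Given a diagram $D$ of $J$ on $F$, I would look for an essential simple closed curve $c \subset F$ disjoint from $D$ whose surgery decreases the genus; if found, perform the associated destabilization to obtain $(F', J')$ with $g(F') < g(F)$, and repeat. If no such curve exists in the current diagram, enlarge the search among diagrams obtained from $D$ by Reidemeister moves and by ambient diffeomorphisms of $F$.

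To make this algorithmic, I would fix the cellular decomposition of $F$ induced by $D$, with vertices the crossings, edges the arcs between crossings, and faces the complementary regions. Candidate destabilizing curves are isotopy classes of essential simple closed curves in $F$ disjoint from the one-skeleton of $D$; these are represented by closed edge-paths in the dual graph of the face decomposition, and can be enumerated finitely. For each candidate one can check essentiality and non-separation from the combinatorial data, so the inner loop is effective. The existence of a destabilization whenever $g(F)$ exceeds the representing genus is guaranteed by Theorem~\ref{thmKuperberg!}\eqref{it:destab}.

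The main obstacle is the outer loop: bounding the amount of Reidemeister and diffeomorphism activity needed before a destabilization becomes \emph{visible} in the diagram. Here I would follow Manturov's framework of atoms and framed four-valent graphs associated with the diagram, where minimality can be detected combinatorially on the atom, and introduce a complexity function on diagrams that strictly decreases under carefully chosen sequences of moves. Combined with Kuperberg's uniqueness, such a complexity argument forces the algorithm to terminate at the minimal representative. The delicate point is calibrating the complexity so that the outer search remains finite; Manturov's parity-based arguments are what make this calibration possible.
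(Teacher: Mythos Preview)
The paper does not give a proof of this theorem. It is stated as a result of Manturov, with a citation to \cite{Mvbunrui} and \cite[\S2.4]{IMvbunrui}, and no argument is supplied in the paper itself. So there is no ``paper's own proof'' to compare your proposal against; the authors are simply quoting a known theorem.

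As for your proposal on its own merits: it is not a proof but an outline that, at the crucial point, defers back to Manturov. The inner loop you describe (enumerating candidate curves in the complement of a fixed diagram) is fine and effective. The substantive content of the theorem, however, lies entirely in the outer loop: showing that if $(F,J)$ is not minimal then one can \emph{effectively} find a diagram on $F$, isotopic to $J$, in which a destabilizing curve becomes visible. Kuperberg's theorem guarantees such a diagram exists, but gives no bound on how many Reidemeister moves or mapping-class elements are required to reach it. You acknowledge this and say you ``would follow Manturov's framework'' and that ``Manturov's parity-based arguments are what make this calibration possible,'' but you do not actually define the complexity function, prove it decreases, or bound the search. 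That is precisely the content of the theorem, so what you have written amounts to citing Manturov --- which is what the paper does.

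If your intent was simply to record that the result is due to Manturov and sketch why one might expect such an algorithm, that is consistent with the paper's treatment. If your intent was to supply an independent proof, the gap is the missing termination argument for the outer search.
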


We go back to our original purpose: to introduce Heegaard Floer homology for virtual links.
Let $\mathcal K$ be a virtual knot.
By Theorems~\ref{thmKuperberg!}
and~\ref{thmManturov!},  
we can construct  
a unique knot $K$ in the minimal genus surface $F$ 
which represents  $\mathcal K$,  
by a finite number of explicit operations. 
Define 
a Floer homology of $\mathcal K$ 
to be 
one of the Floer homologies of $K$ introduced in Section~\ref{secSurf}.  

\begin{remark} \label{rem:questions}
Suppose that a link in a thickened surface $F\x[-1,1]$ is given.
In many cases, 
we can detect whether $F$ is a minimal representing surface, 
without using Manturov's algorithm. 
For example, let $L$ be a  link in $T^2\x[-1,1]$ that represents a virtual link $\mathcal L$.  
If we know that $L$ is non-classical, then we can conclude that $m(\mathcal L) = 1$. 
The twin Alexander polynomial 
can be used in some cases; see Theorem~\ref{mthhayaku}.  
The Jones polynomial can also be used; see \cite{Kauffman, Kauffmani} and Section~\ref{secnoin}.  
If the link has at least two components, we can sometimes use the linking number. E.g., the virtual links in Figure~\ref{figVHopf} 
have non-integer linking numbers, and are hence non-classical.

Our Floer homologies tell us about knots embedded in specific thickened surfaces. Virtual knot theory can be regarded as a theory of knots in thickened surfaces up to stabilization and destabilization, and by Kuperberg's theorem, we can concentrate on knots that are in the minimal genus representing surface. In this case, the knot type in the given minimal genus surface represents the type of the virtual knot and no stabilization is required. It is an interesting question whether there are Floer-homological quantities that are invariant under stabilization.
\end{remark}

\subsection{Stabilization and Floer homology}\label{secshikashi}

One-handle stabilization in virtual knot theory 
changes our Floer homologies in general. In this section, 
we show some examples.

Let $K$ be a knot in the three-ball $B^3$ and $F_g$ a closed, connected, oriented surface of genus $g$. 
Embed $B^3$ in $F_g \x[-1,1]$, 
and call the resulting knot $K_g$.  
Then the $K_g$ are all virtually equivalent. 

By identifying $F_g \times \{0\}$ and $F_g \times \{1\}$, we obtain a knot  $K_g$ in $F_g \x S^1$; see Figure~\ref{figFS1}.
Consider the sutured manifold $(F_g \times S^1)(K_g)$ complementary to $K_g$, where we take two meridional sutures on the boundary of the knot exterior. Recall that
\[
\HFK(F_g \times S^1, K) := \SFH\left((F_g \x S^1)(K_g)\right).
\] 

Embed $B^3$ in $S^3$, and consider the image of 
$K$ in $S^3$, which we also denote by $K$.
By \cite[Proposition 9.15]{J}, 
\[
\HFK(F_g \times S^1, K_g) \cong \HFK(K) \otimes \widehat{\HF}(F_g \x S^1)
\]
as abelian groups. Since we have 
$\widehat{\HF}(S^2 \x S^1)\cong\Z^2$ and $\widehat{\HF}(T^3)\cong\Z^6$, 
\[
\HFK(S^2 \times S^1, K_0) \not\cong \HFK(T^2 \times S^1, K_1).
\]
Hence, we have obtained the following:

\begin{claim}\label{claJFH}  
Although the above $K_0$ and $K_1$ 
represent the same virtual knot, 
their Floer homologies defined above differ.
\end{claim}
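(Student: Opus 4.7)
The plan is to deduce the claim directly from the product/Künneth-type formula for sutured Floer homology combined with well-known computations of $\widehat{\HF}$ for the two closed 3-manifolds involved.

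First, I would verify that $K_0$ and $K_1$ represent the same virtual knot. By construction both are obtained by choosing a small ball $B^3 \subset F_g \times [-1,1]$ and placing a fixed classical knot $K$ inside it. Since any two such embeddings differ by a sequence of handle-stabilizations and destabilizations of the ambient surface performed entirely in the complement of $K$, they are virtually equivalent by Theorem~\ref{pkihon}. In particular $K_g$ represents the same virtual knot for every $g \geq 0$.

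Second, I would compute $\HFK(F_g \times S^1, K_g) = \SFH((F_g \times S^1)(K_g))$ via \cite[Proposition~9.15]{J}. The sutured manifold $(F_g \times S^1)(K_g)$ is, up to sutured equivalence, the disjoint union of the sutured exterior of $K$ in $S^3$ with the sutured manifold obtained from $F_g \times S^1$ by removing a ball containing $K$'s neighbourhood and adding meridional sutures. The Künneth-type splitting in \cite[Proposition~9.15]{J} then yields
\[
\HFK(F_g \times S^1, K_g) \;\cong\; \HFK(K) \otimes \widehat{\HF}(F_g \times S^1)
\]
as abelian groups, which is the identity quoted just before the claim.

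Third, I would invoke the standard computations $\widehat{\HF}(S^2 \times S^1) \cong \Z^2$ and $\widehat{\HF}(T^3) \cong \Z^6$ from \cite{OSmfd}. Since $\HFK(K)$ is a finitely generated, non-trivial abelian group (its Euler characteristic is the Alexander polynomial $\Delta_K$, and in particular its total rank is at least one), comparing ranks gives
\[
\operatorname{rk}\HFK(T^2 \times S^1, K_1) = 3 \cdot \operatorname{rk}\HFK(S^2 \times S^1, K_0),
\]
so the two groups cannot be isomorphic, proving the claim.

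The only real point requiring care is the second step: checking that the product decomposition hypothesis of \cite[Proposition~9.15]{J} really applies after one isolates $K$ in a ball and separates the two pieces of the sutured manifold. Once that is in place the argument is a one-line rank comparison and the main obstacle disappears.
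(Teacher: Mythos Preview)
Your proposal is correct and follows essentially the same route as the paper: apply the K\"unneth-type formula from \cite[Proposition~9.15]{J} to obtain $\HFK(F_g\times S^1,K_g)\cong\HFK(K)\otimes\widehat{\HF}(F_g\times S^1)$, then plug in $\widehat{\HF}(S^2\times S^1)\cong\Z^2$ and $\widehat{\HF}(T^3)\cong\Z^6$ to see the two groups have different ranks. Your added remark that $\HFK(K)$ has positive rank is the only extra ingredient, and it is exactly what is needed to make the rank comparison conclusive.
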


Let $K$ be a knot in a thickened surface $F \times [-1,1]$. 
The invariant $\xi(K)$ (and hence  $\varepsilon(K) = -\xi(K)$ by Theorem~\ref{thmmuep}) is 
determined by $\HFL(F \times S^1, K \sqcup -K^\ast)$:   
The homology class of a bracelet Seifert surface relative to the boundary 
is determined uniquely by definition.  
The Thurston norm of a class in $H_2((F \times S^1) \setminus N(K))$
is determined by $\HFL(F \times S^1, K \sqcup -K^*)$ according to
\cite{J3}.   

As stated above, $\varepsilon(K)$ is determined by Floer homology.
Some handle stabilizations in virtual knot theory change $\varepsilon(K)$ 
and hence $\HFL(F \times S^1, K \sqcup -K^\ast)$; see Remark~\ref{rempp}.

\subsection{Floer homology and the cc bracelet polynomial}

The cc bracelet polynomial can be recovered from the decategorification of sutured Floer homology~\cite{FJR}.
Indeed, let 
\[
(M, \gamma) := (F \times S^1)(K \sqcup -K^\ast)
\]
be the sutured manifold complementary to $K \sqcup -K^\ast$,
where $M := (F \times S^1) \setminus N(K \sqcup -K^\ast)$ and $\gamma$ consists of two oppositely oriented meridional sutures 
on each component of $\partial M$.
The relative Turaev torsion $\tau(M, \gamma)$ is defined in \cite{FJR}
as a $\text{Spin}^c$ refinement of the relative Reidemeister torsion of the pair $(M, R_-(\gamma))$.
A straightforward generalization of \cite[Lemma~6.3]{FJR} 
gives that $\tau(M, \gamma)$ is the Alexander polynomial $\Delta_{K \sqcup -K^\ast} \in \Z[H_1(M)]$. Furthermore, 
\[
\tau(M, \gamma, \fs) = \chi(\text{SFH}(M,\gamma,\fs))
\]
for every $\text{Spin}^c$ structure $\fs \in \text{Spin}^c(M, \gamma)$.
Let $b \in H_2(M, \partial M)$ be the class of a bracelet Seifert surface, and consider the map 
\[
p \colon H_1(M) \to \Z
\]
given by intersection with $b$.
Then the cc bracelet polynomial of $K \sqcup K^\ast$
is $p(\Delta_{K \sqcup -K^\ast})$.

\section{The Behrens--Golla $\delta$-invariant and virtual knots}\label{sechirameki}

Let $K$ be a knot in a thickened surface $\mathcal F=F\x[-1,1]$, and
let $K^@ \in \{-K, -K^\ast\}$, as in Section~\ref{secwakatta}.   
Construct $F\x S^1$ from $\mathcal F$ containing $K$ and $\widehat{\mathcal F} = F \times [-1,1]$ containing $K^@$, as before. 
Then $F\x S^1$ includes the knot $K\sqcup K^@$; see Figure~\ref{figFS2}.
By attaching a four-dimensional one-handle~$h$ to $F \times S^1$, 
we obtain the three-manifold
\[
M^g := (F\x S^1) \# (S^1\x S^2),
\]
where $g$ is the genus of $F$.  
Inside $M^g$, attach a two-dimensional one-handle to $K \sqcup K^@$ contained in~$h$. We obtain the knot $A = K \# K^@$ in $M^g$; see Figure~\ref{fighirameita}.
Note that $K \# K^@$ is null-homologous in $M^g$.  
Figure~\ref{fighirameki} shows a Kirby diagram representing $M^g$, together with the knot $A$.
The linking number of $A$ and each component of the framed link 
in the diagram is zero. 
Let $M^g_r(A)$ be the closed oriented three-manifold 
obtained from $M^g$  
by a surgery along $A$ with framing $r \in \Z$.

\begin{figure}
\begingroup%
  \makeatletter%
  \providecommand\color[2][]{%
    \errmessage{(Inkscape) Color is used for the text in Inkscape, but the package 'color.sty' is not loaded}%
    \renewcommand\color[2][]{}%
  }%
  \providecommand\transparent[1]{%
    \errmessage{(Inkscape) Transparency is used (non-zero) for the text in Inkscape, but the package 'transparent.sty' is not loaded}%
    \renewcommand\transparent[1]{}%
  }%
  \providecommand\rotatebox[2]{#2}%
  \newcommand*\fsize{\dimexpr\f@size pt\relax}%
  \newcommand*\lineheight[1]{\fontsize{\fsize}{#1\fsize}\selectfont}%
  \ifx\svgwidth\undefined%
    \setlength{\unitlength}{141.8795162bp}%
    \ifx\svgscale\undefined%
      \relax%
    \else%
      \setlength{\unitlength}{\unitlength * \real{\svgscale}}%
    \fi%
  \else%
    \setlength{\unitlength}{\svgwidth}%
  \fi%
  \global\let\svgwidth\undefined%
  \global\let\svgscale\undefined%
  \makeatother%
  \begin{picture}(1,0.65048058)%
    \lineheight{1}%
    \setlength\tabcolsep{0pt}%
    \put(0,0){\includegraphics[width=\unitlength,page=1]{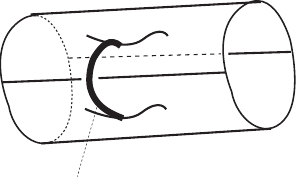}}%
    \put(0.07260371,0.00072447){\makebox(0,0)[lt]{\lineheight{1.25}\smash{\begin{tabular}[t]{l}A 4-dimensional 1-handle\end{tabular}}}}%
    \put(0.59432319,0.51832394){\makebox(0,0)[lt]{\lineheight{1.25}\smash{\begin{tabular}[t]{l}$K$\end{tabular}}}}%
    \put(0.59432319,0.26941724){\makebox(0,0)[lt]{\lineheight{1.25}\smash{\begin{tabular}[t]{l}$K^@$\end{tabular}}}}%
  \end{picture}%
\endgroup%

\caption{{
A four-dimensional one-handle surgery and a connected sum of knots change 
$K\sqcup K^{@}$ in $F\x S^1$ 
into 
$A=K\# K^{@}$ in $(F\x S^1)\# (S^1\x S^2)$. 
}\label{fighirameita}}   
\end{figure}

\begin{figure}
\begingroup%
  \makeatletter%
  \providecommand\color[2][]{%
    \errmessage{(Inkscape) Color is used for the text in Inkscape, but the package 'color.sty' is not loaded}%
    \renewcommand\color[2][]{}%
  }%
  \providecommand\transparent[1]{%
    \errmessage{(Inkscape) Transparency is used (non-zero) for the text in Inkscape, but the package 'transparent.sty' is not loaded}%
    \renewcommand\transparent[1]{}%
  }%
  \providecommand\rotatebox[2]{#2}%
  \newcommand*\fsize{\dimexpr\f@size pt\relax}%
  \newcommand*\lineheight[1]{\fontsize{\fsize}{#1\fsize}\selectfont}%
  \ifx\svgwidth\undefined%
    \setlength{\unitlength}{276.35854768bp}%
    \ifx\svgscale\undefined%
      \relax%
    \else%
      \setlength{\unitlength}{\unitlength * \real{\svgscale}}%
    \fi%
  \else%
    \setlength{\unitlength}{\svgwidth}%
  \fi%
  \global\let\svgwidth\undefined%
  \global\let\svgscale\undefined%
  \makeatother%
  \begin{picture}(1,0.84641426)%
    \lineheight{1}%
    \setlength\tabcolsep{0pt}%
    \put(0,0){\includegraphics[width=\unitlength,page=1]{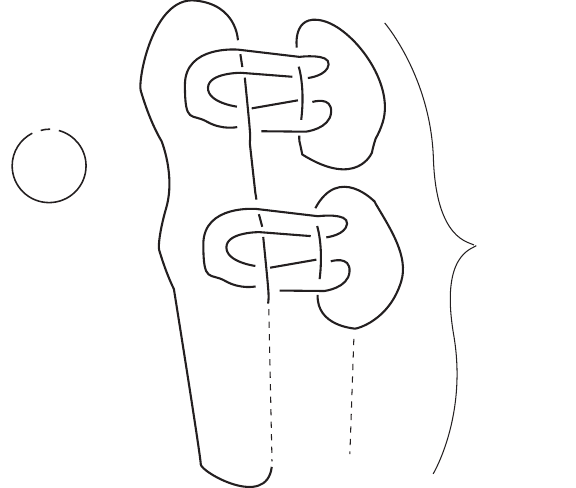}}%
    \put(0.85230584,0.40748977){\color[rgb]{0.1372549,0.12156863,0.1254902}\makebox(0,0)[lt]{\lineheight{1.25}\smash{\begin{tabular}[t]{l}g copies\end{tabular}}}}%
    \put(0,0){\includegraphics[width=\unitlength,page=2]{Kirby.pdf}}%
  \end{picture}%
\endgroup%

\caption{{ 
A Kirby diagram with a knot which represents 
$K\# K^{@}$ in $M^g$.
All components of the framed link have framing zero, which we omit.  
}\label{fighirameki}}   
\end{figure}

Suppose that $K_1$ in $F_1 \times [-1,1]$ is obtained from $K$ in $F \times [-1,1]$ by handle stabilizations in virtual knot theory,
and let $s$ be the genus of $F_1$. 
Construct the link $A_1$ in the three-manifold $M^s$ and the surgery $M^s_r(A_1)$ from $K_1$ and $F_1$,  as above. 
We can obtain $(M^s, A_1)$ from  $(M^g, A)$    
by performing four-dimensional two-handle surgeries, which correspond to the one-handle surgeries in virtual knot theory. 
The framed link representation after the surgery is obtained 
by increasing $g$ in Figure~\ref{fighirameki} to $s$. 

Thus, we obtain a four-dimensional cobordism
$W$ from $M^g_r(A)$ to $M^s_r(A_1)$.    
Then $H_2(W)$ has zero intersection form.  
Indeed, $W$ is obtained by attaching four-dimensional two-handles $h_1,...,h_\nu$ to $M_r^g(A) \x [0,1]$, where $\nu = 2(s - g)$. 
The linking number of each pair of components of the framed link in Figure~\ref{fighirameki} is zero.  Furthermore, all framings are zero by construction. 

Let $\delta$ be the Behrens--Golla invariant 
defined above Theorem~4.1 in \cite{BG}.  
Let $\fs$ and $\fs_1$ be $\text{Spin}^c$ structures on $M^g_r(A)$  and $M^s_r(A_1)$, respectively, with $c_1(\fs) = 0$ and $c_1(\fs_1) = 0$.
Then there is a $\text{Spin}^c$ structure $\ft$ on the cobordism $W$ such that $c_1(\ft) = 0$ and the restriction of $\ft$ to 
$M^g_r(A)$ is $\fs$ and to $M^s_r(A_1)$ it is $\fs_1$.  Then we have the following:

\begin{theorem}\label{thmwo!} 
Using the notation above, we have 
 \[
 \delta(M^g_r(A), \fs) \le \delta(M^s_r(A_1), \fs_1).
 \] 
\end{theorem}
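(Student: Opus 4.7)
The strategy is to apply a Heegaard Floer cobordism inequality for the Behrens--Golla correction term $\delta$, in the spirit of the Ozsv\'ath--Szab\'o monotonicity of the $d$-invariant under negative semi-definite cobordisms. The plan is to show that the given cobordism $W$ from $M^g_r(A)$ to $M^s_r(A_1)$ is so degenerate that the cobordism correction term vanishes identically, and then to read off the desired inequality.

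First I would confirm that the intersection form on $H_2(W;\Z)$ vanishes. This is essentially already recorded just before the theorem statement: $W$ is built from $M^g_r(A)\times[0,1]$ by attaching $\nu = 2(s-g)$ four-dimensional two-handles along a framed link whose framings and pairwise linking numbers all vanish, as can be read off from the Kirby diagram in Figure~\ref{fighirameki}. The co-cores of these two-handles span $H_2(W;\Z)$ relative to the ends, and their intersection pairing is computed by the linking matrix of the attaching link, which is the zero matrix. Hence the intersection form on $H_2(W;\Z)$ is identically zero; in particular, $b_2^+(W) = b_2^-(W) = 0$ and $\sigma(W) = 0$. Combined with the hypothesis $c_1(\ft) = 0$, every natural correction term in a standard four-dimensional cobordism inequality evaluates to zero.

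Second, I would invoke the cobordism inequality for $\delta$ from the definition preceding \cite[Theorem~4.1]{BG}. Their invariant is engineered so that for a $\mathrm{Spin}^c$ cobordism $(W,\ft) \colon (Y_1,\fs_1) \to (Y_2,\fs_2)$ with $b_2^+(W) = 0$ and $c_1(\ft)$ torsion (both satisfied here on the nose), one obtains a monotonicity bound of the form
\[
\delta(Y_2,\fs_2) - \delta(Y_1,\fs_1) \;\ge\; \frac{c_1(\ft)^2 + b_2^-(W)}{4},
\]
generalizing the classical Ozsv\'ath--Szab\'o inequality beyond the rational homology sphere setting. Substituting our vanishing values $c_1(\ft)^2 = 0$ and $b_2^-(W) = 0$ makes the right-hand side zero, so the claimed inequality $\delta(M^g_r(A),\fs) \le \delta(M^s_r(A_1),\fs_1)$ follows directly.

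The main obstacle is bookkeeping rather than substantive work. Our three-manifolds satisfy $b_1 \ge 2g > 0$ as soon as $g \ge 1$, so the original Ozsv\'ath--Szab\'o $d$-invariant does not apply on the nose; this is precisely the reason for appealing to the Behrens--Golla extension, and one must verify that the $\mathrm{Spin}^c$ admissibility hypotheses in \cite{BG} are indeed satisfied by our choices of $\fs$, $\fs_1$, and $\ft$. Since the intersection form on $H_2(W;\Z)$ vanishes, the existence of a $\mathrm{Spin}^c$ structure $\ft$ on $W$ with $c_1(\ft) = 0$ extending the prescribed torsion $\fs$ and $\fs_1$ is unobstructed, and nothing beyond the generalized cobordism inequality and these routine checks should be needed.
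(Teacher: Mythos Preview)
Your proposal is correct and takes essentially the same approach as the paper: the paper's proof is the single sentence ``This is an immediate consequence of \cite[Theorem~4.1]{BG},'' relying on exactly the facts about $W$ (zero intersection form, existence of $\ft$ with $c_1(\ft)=0$ restricting to $\fs$ and $\fs_1$) that were established in the paragraphs immediately preceding the theorem. Your write-up simply unpacks that citation by spelling out the cobordism inequality and verifying that every correction term vanishes.
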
   

\begin{proof}
	This is an immediate consequence of \cite[Theorem~4.1]{BG}.
\end{proof}

\begin{corollary}\label{thmv}
Let $P$ be a knot in $F \x [-1,1]$
and  let $Q$ be a knot in $F_1 \x [-1,1]$. 
Construct $M^g_r(A)$ and $M^s_r(A_1)$, 
as above, where $A = P \# P^@$, $A_1 = Q \# Q^@$,
$g = g(F)$, $s = g(F_1)$,
and $r \in \Z$. If $s > g$ but
\[
\delta(M^g_r(A), \fs) > \delta(M^s_r(A_1),\fs_1)
\]
for $\text{Spin}^c$ structures $\fs$ and $\fs_1$ with vanishing first Chern classes,
then $(F_1, Q)$ cannot be obtained from $(F, P)$ using virtual one-handle
stabilizations. 
\end{corollary}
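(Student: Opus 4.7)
The plan is to prove the contrapositive. Assume that $(F_1, Q)$ \emph{can} be obtained from $(F, P)$ by a sequence of virtual one-handle stabilizations; I will derive $\delta(M^g_r(A), \fs) \le \delta(M^s_r(A_1), \fs_1)$, directly contradicting the hypothesis. Under this assumption, the pair $(P, Q)$ slots into the setup preceding Theorem~\ref{thmwo!} with $P$ playing the role of $K$ and $Q$ that of $K_1$, and the $s - g$ prescribed stabilizations are exactly the ones used to construct the four-dimensional cobordism $W$ from $M^g_r(A)$ to $M^s_r(A_1)$.

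Next, I would invoke the cobordism $W$ itself. Each virtual one-handle stabilization is realized by attaching a four-dimensional two-handle to $M^g_r(A) \times [0,1]$, and the Kirby diagram in Figure~\ref{fighirameki} shows that the attaching curves can be chosen so that all pairwise linking numbers, and all framings, vanish; hence the intersection form on $H_2(W)$ is identically zero. This is precisely the hypothesis of Theorem~\ref{thmwo!}, so for $\text{Spin}^c$ structures $\fs$ and $\fs_1$ with $c_1(\fs) = 0 = c_1(\fs_1)$ that are the restrictions of a $\text{Spin}^c$ structure $\ft$ on $W$ with $c_1(\ft) = 0$, one obtains $\delta(M^g_r(A), \fs) \le \delta(M^s_r(A_1), \fs_1)$.

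The only mild subtlety is checking that every pair of $\text{Spin}^c$ structures with vanishing first Chern classes on the two ends actually extends to a $\text{Spin}^c$ structure on $W$ with vanishing first Chern class. This is handled by the zero intersection form property: since the two-handles are attached along a framed link whose linking matrix is the zero matrix, the characteristic class obstruction to extending vanishes, and one may choose $\ft$ freely among the extensions, in particular one with $c_1(\ft) = 0$ restricting to the given $\fs$ and $\fs_1$. This is the point already noted in the paragraph preceding Theorem~\ref{thmwo!}.

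With such an extension in hand, Theorem~\ref{thmwo!} applies and yields the desired inequality, contradicting the hypothesis $\delta(M^g_r(A), \fs) > \delta(M^s_r(A_1), \fs_1)$. I do not expect a serious obstacle: the corollary is essentially a direct contrapositive repackaging of Theorem~\ref{thmwo!}, once one observes that the inputs there are precisely what the existence of a stabilization sequence from $(F, P)$ to $(F_1, Q)$ produces.
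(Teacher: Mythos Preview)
Your proposal is correct and matches the paper's approach: the paper gives no explicit proof of the corollary, treating it as an immediate consequence of Theorem~\ref{thmwo!}, and your contrapositive argument is precisely the unpacking of that implication. Your observation in the final paragraph---that the corollary is essentially Theorem~\ref{thmwo!} restated contrapositively once one has a stabilization sequence---is exactly the intended reasoning.
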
  

In particular, by Theorem~\ref{thmKuperberg!}, if $(F, P)$ is minimal genus (e.g., a classical knot or the virtual trefoil), then we can conclude that $(F, P)$ and $(F_1, Q)$ are not virtually equivalent
whenever
\[
\delta(M^g_r(A), \fs) > \delta(M^s_r(A_1),\fs_1).
\]
We would like to have a specific example of this phenomenon.

\end{document}